    \def\ps@copyright{\ps@empty
    \def\@oddfoot{\hfil\small\copyright 1997, \SMF}}
\newcommand{\BibTeX}{{\scshape Bib}\kern-.08em\TeX}
\newcommand{\T}{\S\kern .15em\relax }
\newcommand{\AMS}{$\mathcal{A}$\kern-.1667em\lower.5ex\hbox
        {$\mathcal{M}$}\kern-.125em$\mathcal{S}$}
\newtheorem{definition}{Definition}[section]
\newtheorem{theorem}[definition]{Theorem}
\newtheorem{proposition}[definition]{Proposition}
\newtheorem{lemma}[definition]{Lemma}
\newtheorem{remark}[definition]{Remark}
\newcommand{\R}{\mathbb{R}}
\newcommand{\Z}{\mathbb{Z}}
\newcommand{\noadd}[1]{}
\def\R{\mathbb{R}}
\newcommand{\Conv}{\operatorname{Conv}}
\newcommand{\Cent}{\operatorname{Cent}}
\title{On a conjecture of Kottwitz and Rapoport}
\title[On a conjecture of Kottwitz and Rapoport]{On a conjecture of Kottwitz and Rapoport}
\author{Q\"endrim R. Gashi}
\address{Max-Planck-Institut f\"ur Mathematik\\
Vivatsgasse 7, 53111 Bonn}
\email{qendrim@mpim-bonn.mpg.de}
\begin{document}
\def\smfbyname{}

\begin{abstract}
We prove a conjecture of Kottwitz and Rapoport which implies a converse to Mazur's Inequality for all split and quasi-split (connected) reductive groups. These results are related to the non-emptiness of certain affine Deligne-Lusztig varieties.
\end{abstract}

\maketitle


\section{Introduction}

Mazur's Inequality (\cite{mazur}, \cite{mazur2}) is related to the study of $p$-adic estimates of the number of points of certain algebraic varieties over a finite filed of characteristic $p$. But, it is most easily stated using isocrystals, where an isocrystal is a pair $(V,\Phi)$, with $V$ being a finite-dimensional vector space over the fraction field, $K$, of the ring of Witt vectors $W(\overline{ \mathbb{F}}_p)$, equipped with a $\sigma$-linear bijective endomorphism $\Phi$ of $V$, where $\sigma$ is the automorphism of $K$ induced by the Frobenius automorphism of $\overline{\mathbb{F}}_p$. We now recall Mazur's inequality.

Suppose that $(V,\Phi)$ is an isocrystal of dimension $n$. By Dieudonn\'{e}-Manin theory, we can associate to $V$ its Newton vector $\nu (V,\Phi) \in (\mathbb{Q}^n)_+ := \{ (\nu_1,\ldots,\nu_n) \in \mathbb{Q}^n: \nu_1 \geq \nu_2 \geq \ldots \geq \nu_n \}$, which classifies isocrystals of dimension $n$ up to isomorphism. If $\Lambda$ is a $W(\overline{ \mathbb{F}}_p)$-lattice in $V$, then we can associate to it the Hodge vector $\mu (\Lambda) \in (\mathbb{Z}^n)_+ : = (\mathbb{Q}^n)_+ \cap \mathbb{Z}^n$, which measures the relative position of the lattices $\Lambda$ and $\Phi(\Lambda)$. Denote by $\geq$ the usual dominance order. Mazur's Inequality asserts that $\mu (\Lambda) \geq \nu (V,\Phi)$.

A converse to this inequality was proved by Kottwitz and Rapoport in \cite{krapo}, where they showed that if we let $(V,\Phi)$ be an isocrystal of dimension $n$, and let $\mu=(\mu_1,...,\mu_n) \in (\mathbb{Z}^n)_+$ be such that $\mu \geq \nu(V,\Phi)$, then there exists a $W(\overline{ \mathbb{F}}_p)$-lattice $\Lambda$ in $V$ satisfying $\mu = \mu(\Lambda)$.

Both Mazur's Inequality and its converse can be regarded as statements for the group $GL_n$ since the dominance order arises naturally in the context of the root system for $GL_n$. In fact, there is a bijection (see \cite{kotiso}) between isomorphism classes of isocrystals of dimension $n$ and the set of $\sigma$-conjugacy classes in $GL_n(K)$. Kottwitz studies in ibid. the set $B(G)$ of the $\sigma$-conjugacy classes in $G(K)$, for a connected reductive group $G$ over $\mathbb{Q}_p$, and, as he notes, there is a bijection between $B(G)$ and the isomorphism classes of isocrystals of dimension $n$ with ``$G$-structure'' (for $G=GL_n$ these are simply isocrystals). So, results about isocrystals, and more generally isocrystals with additional structure, are related to results about the $\sigma$-conjugacy classes of certain reductive groups.

With this viewpoint in mind, we are interested in the group-theoretic generalizations of Mazur's Inequality and its converse, especially since they appear naturally in the study of the non-emptiness of certain affine Deligne-Lusztig varieties. To make these statements more precise,  we introduce some notation.

Let $F$ be a finite extension of $\mathbb{Q}_p$, with uniformizing element $\pi$. Denote by $\mathfrak{o}_F$ the ring of integers of $F$. Suppose $G$ is a split connected reductive group, $B$ a Borel subgroup and $T$ a maximal torus in $B$, all defined over $\mathfrak{o}_F$. (Quasi-split groups are treated in the last section of the paper.) Let $L$ be the completion of the maximal unramified extension of $F$ in some algebraic closure of $F$, let $\sigma$ be the Frobenius of $L$ over $F$, and let $\mathfrak{o}_L$ be the valuation ring of $L$.

We write $X$ for the set of co-characters $X_*(T)$. Let $\mu \in X$ be a dominant element and $b\in G(L)$. The affine Deligne-Lusztig variety $X_{\mu}^G(b)$ is defined by $$X_{\mu}^G(b):= \{ x \in G(L)/G(\mathfrak{o}_L) : \, x^{-1} b \sigma (x) \in G(\mathfrak{o}_L) \mu (\pi) G(\mathfrak{o}_L) \} .$$ These $p$-adic ``counterparts'' of the classical Deligne-Lusztig varieties, get their name by virtue of being defined in the same way as the latter and have been studied by a number of authors. See, for example, \cite{GHKR}, \cite{GHKR2}, \cite{eva}, and references therein. For the relevance of affine Deligne-Lusztig varieties to Shimura varieties, the reader may wish to consult \cite{rapoport2}.

We need some more notation to be able to formulate the group-theoretic generalizations of Mazur's Inequality and its converse. Let $P=MN$ be a parabolic subgroup of $G$ which contains $B$, where $M$ is the unique Levi subgroup of $P$ containing $T$. The Weyl group of $T$ in $G$ will be denoted by $W$. We let $X_{G}$ and $X_{M}$ be the quotient of $X$ by the coroot lattice for $G$ and $M$, respectively. Also, we let $\varphi_{G}:X\rightarrow X_G$ and $\varphi_{M}:X\rightarrow X_M$ denote the respective natural projection maps.

Let $B=TU$, with $U$ the unipotent radical. If $g \in G(L)$, then there is a unique element of $X$, denoted $r_B(g)$, so that $g \in G(\mathfrak{o}_L) \, r_B(g)(\pi) U(L)$. If the image of $r_B(g)$ under the canonical surjection $X \rightarrow X_G$ is denoted by $w_G(g)$, then we have a well-defined map $w_G: G(L) \rightarrow X_G$, the Kottwitz map \cite{kotiso}. In a completely analogous way one defines the map $w_M: M(L) \rightarrow X_M$, where one considers $M$ instead of $G$.

We use the partial ordering $\stackrel{P}{\leq}$ in $X_M$, where for $\mu, \nu \in X_M$, we write $\nu \stackrel{P}{\leq} \mu$ if and only if $\mu - \nu$ is a nonnegative integral linear combination of the images in $X_M$ of the coroots corresponding to the simple roots of $T$ in $N$.

We can now state the first main result in this paper.

\begin{theorem}
Let $\mu \in X$ be dominant and let $b \in M(L)$ be a basic element such that $w_M(b)$ lies in $X_M^{+}$. Then
$$X_{\mu}^G(b) \neq \emptyset \Longleftrightarrow  w_M(b) \stackrel{P}{\leq} \mu,$$ where in the last relation we consider $\mu$ as an element of $X_M$. \emph{(}For the definition of basic see \cite{kotiso} and for the definition of $X_M^{+}$ see \cite{kot}.\emph{)}
\label{groupmazur}
\end{theorem}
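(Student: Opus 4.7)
The plan is to treat the two implications separately. For the forward direction $(\Rightarrow)$, non-emptiness of $X_\mu^G(b)$ means that $b$ is $\sigma$-conjugate in $G(L)$ to some element of $G(\mathfrak{o}_L)\mu(\pi)G(\mathfrak{o}_L)$. Applying the Iwasawa-type decomposition $G(L) = \bigsqcup_{\lambda \in X} G(\mathfrak{o}_L)\lambda(\pi)P(L)$ together with the Kottwitz map for $M$, one deduces the group-theoretic Mazur inequality $w_M(b) \stackrel{P}{\leq} \mu$, where $\mu$ is viewed as an element of $X_M$ via $\varphi_M$. This implication has been established by Rapoport--Richartz and by Kottwitz--Rapoport, and I would simply invoke it.

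For the converse $(\Leftarrow)$, the strategy is to reduce the existence of $x$ with $x^{-1}b\sigma(x) \in G(\mathfrak{o}_L)\mu(\pi)G(\mathfrak{o}_L)$ to a combinatorial statement about Weyl orbits in $X$. Because $b$ is basic in $M(L)$ with $w_M(b) \in X_M^+$, Kottwitz's classification of basic $\sigma$-conjugacy classes pins down the $M(L)$-$\sigma$-conjugacy class of $b$ in terms of $w_M(b)$. A standard manipulation --- replacing $b$ by a convenient representative of its $M(L)$-$\sigma$-conjugacy class and exploiting $\sigma$-conjugation inside $M(L)\subset G(L)$ --- then shows that it suffices to exhibit some $\lambda$ in the Weyl orbit $W\mu$ of $\mu$ such that $\varphi_M(\lambda) = w_M(b)$ in $X_M$.

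The crux is therefore the following purely combinatorial assertion, which is essentially the Kottwitz--Rapoport conjecture: given a dominant $\mu \in X$ and $\nu \in X_M^+$ with $\nu \stackrel{P}{\leq} \varphi_M(\mu)$, there exists $\lambda \in W\mu$ with $\varphi_M(\lambda) = \nu$. I would prove this by induction on the height of $\varphi_M(\mu) - \nu$, written as a nonnegative integer linear combination of the images in $X_M$ of the simple coroots associated to simple roots of $T$ in $N$. The induction step requires, given $\lambda' \in W\mu$ whose image in $X_M$ strictly dominates $\nu$ for $\stackrel{P}{\leq}$, a simple reflection (or short chain of reflections) in the Weyl group of $G$ which, applied to $\lambda'$, decreases the discrepancy in $X_M$ by exactly one simple coroot in $N$ while keeping the result inside $W\mu$.

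The main obstacle is precisely this combinatorial claim. The difficulty is that $\stackrel{P}{\leq}$ is a comparatively weak partial order on $X_M$, so one cannot match $\nu$ coordinate-by-coordinate; instead one needs delicate root-theoretic bookkeeping --- in particular a clear picture of how the projection $\varphi_M$ interacts with the Weyl group action on $X$ --- to guarantee that the inductive modifications genuinely live in $W\mu \subset X$ and not merely in its image in $X_M$. Once the combinatorial statement is secured, reassembling it with the reduction in the second paragraph yields the converse and completes the proof.
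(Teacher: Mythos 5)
The fatal gap is in your reduction for the converse direction. You claim that, after standard manipulations, ``it suffices to exhibit some $\lambda$ in the Weyl orbit $W\mu$ of $\mu$ such that $\varphi_M(\lambda) = w_M(b)$,'' and you then pose the combinatorial claim: for dominant $\mu\in X$ and $\nu\in X_M^+$ with $\nu\stackrel{P}{\leq}\varphi_M(\mu)$, there exists $\lambda\in W\mu$ with $\varphi_M(\lambda)=\nu$. That claim is false. Take $G=GL_3$, $M=T$ (so $P=B$, $\varphi_M=\mathrm{id}$), $\mu=(4,2,0)$, $\nu=(3,2,1)$. Then $\nu$ is regular dominant, hence lies in $X_M^+$, and $\mu-\nu=(1,0,-1)$ is a nonnegative sum of simple coroots, so $\nu\stackrel{P}{\leq}\mu$. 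But $(3,2,1)$ is not a permutation of $(4,2,0)$, so no $\lambda\in W\mu$ projects to $\nu$. The correct reduction (Kottwitz, \cite{kot}, Theorem 4.3) says that $X_\mu^G(b)\neq\emptyset$ iff $w_M(b)\in\varphi_M(\mathcal{P}_\mu)$, where $\mathcal{P}_\mu$ is the set of \emph{all} lattice points in $\Conv(W\mu)$ having the same image as $\mu$ in $X_G$, not just the Weyl orbit. Finding $\lambda\in W\mu$ would certainly be sufficient, but it is too strong to be achievable, and it is not what the Kottwitz--Rapoport conjecture asserts.

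Beyond this, your sketch of the combinatorial argument (induction on height, finding simple reflections that reduce the discrepancy by exactly one simple coroot while staying in $W\mu$) does not resemble the argument that actually closes the conjecture. What is needed is to produce an element of $\mathcal{P}_\mu$ with the right image in $X_M$, and the delicate step is showing that a carefully chosen lift (the ``fractional part'' representative $z'$) actually lies in $\Conv(W\mu)$. The paper achieves this by showing $z'$ lies in the cone $C^+_\mu$, invoking a criterion (in terms of Peterson's $\lambda$-minuscule Weyl group elements, or equivalently a numbers game with a cutoff) for the existence of a $z'$-minuscule $w'$ with $w'(z')$ dominant, and verifying the required inequality $\langle z',\alpha^\vee\rangle\geq -1$ via a root-theoretic lemma. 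You would need to replace your combinatorial claim with the correct statement about $\mathcal{P}_\mu$ and develop an argument along these lines; the height-induction plan as stated does not engage with the actual obstruction.
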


We also prove a similar theorem for unramified groups. The precise formulation (Theorem \ref{mainquasi}) and the proof of that result is postponed to the last section of the paper.

One direction in Theorem \ref{groupmazur}, namely $$X_{\mu}^G(b) \neq \emptyset \Longrightarrow w_M(b) \stackrel{P}{\leq} \mu,$$ is the group-theoretic generalization of Mazur's Inequality, and it is proved by Rapoport and Richartz in \cite{rapritch} (see also \cite{kot}, Theorem 1.1, part (1)).

The other direction, i.e., the group-theoretic generalization of the converse to Mazur's Inequality, is a conjecture of Kottwitz and Rapoport \cite{krapo}. Next, we discuss how their conjecture is reduced to one formulated only in terms of root systems.

Let $$\mathcal{P} _{\mu}: =\left\{\nu\in X: (i)\, \varphi_{G}(\nu)=\varphi_{G}(\mu); \, \emph{and} \,\,(ii)\, \nu\in \Conv\left(W\mu\right)\right\},$$ where $\Conv\left(W\mu\right)$ is the convex hull of $W\mu :=\left\{w(\mu): w\in W\right\}$ in $\mathfrak{a} :=X\otimes_{\mathbb{Z}}\mathbb{R}$. Then we have (cf. \cite{kot}, Theorem 4.3) $$X_{\mu}^G(b) \neq \emptyset \Longleftrightarrow w_M(b) \in \varphi_M (\mathcal{P}_{\mu}).$$

This way we see that the other implication in Theorem \ref{groupmazur} follows if we show that $$w_M(b) \in \varphi_M (\mathcal{P}_{\mu}) \Longrightarrow  w_M(b) \stackrel{P}{\leq} \mu,$$ i.e., if we show that for $\nu \in X_M^+$ we have
\begin{equation}\label{bllah}
\nu \in \varphi_M (\mathcal{P}_{\mu}) \Longrightarrow  \nu \stackrel{P}{\leq} \mu.
\end{equation}

This is guaranteed by the following:

\begin{theorem}\label{conjecture} \emph{(Kottwitz-Rapoport Conjecture; split case)} We have that
$$\varphi_M\left(\mathcal{P}_\mu\right)=\left\{\nu\in X_M :(i)\,\nu, \mu \,\,\emph{have the same image in}\,\, X_G;\right.$$
$$\left. \hspace{6.0cm} (ii)\, \emph{the image of}\, \,\nu\, \emph{in} \,\mathfrak{a}_M \,\emph{lies in}\, pr_M\left(\Conv \left( W\mu \right) \right)\right\},$$ where $\mathfrak{a}_M:=X_M\otimes_{\Z}\R$ and $\emph{pr}_M:\mathfrak{a}\rightarrow\mathfrak{a}_M$ denotes the natural projection induced by $\varphi_M$.
\end{theorem}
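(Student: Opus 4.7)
The inclusion $\varphi_M(\mathcal{P}_\mu) \subseteq \{\nu \in X_M : (i), (ii)\}$ is formal: if $\nu \in \mathcal{P}_\mu$, then since $\varphi_G$ factors as $X \stackrel{\varphi_M}{\longrightarrow} X_M \longrightarrow X_G$, condition (i) holds for $\varphi_M(\nu)$, while the image of $\varphi_M(\nu)$ in $\mathfrak{a}_M$ equals $\mathrm{pr}_M(\nu) \in \mathrm{pr}_M(\Conv(W\mu))$, giving (ii).

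For the reverse inclusion, fix $\lambda \in X_M$ satisfying (i) and (ii) and a lift $\hat\lambda \in X$. One seeks $\nu \in X$ lying in the affine sublattice $\hat\lambda + Q_M^\vee$ (so that $\varphi_M(\nu) = \lambda$) and in $\Conv(W\mu)$; the condition $\varphi_G(\nu) = \varphi_G(\mu)$ then follows automatically from (i), since $\varphi_G$ factors through $\varphi_M$. By hypothesis (ii) the real-affine span $\hat\lambda + (Q_M^\vee \otimes \R)$ already meets $\Conv(W\mu)$, and the task is to locate an actual lattice point in the intersection. Both sides of the asserted equality are $W_M$-invariant (the left because $\mathcal{P}_\mu$ is $W$-invariant and $\varphi_M$ is $W_M$-equivariant), so we may assume $\lambda \in X_M^+$. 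In this $M$-dominant case, via~(\ref{bllah}) together with an easy converse (given $\lambda \stackrel{P}{\leq} \mu$, one constructs a lift $\nu \in \mathcal{P}_\mu$ directly by unwinding the $P$-expression), the assertion reduces to the purely combinatorial statement $\lambda \stackrel{P}{\leq} \mu$: that $\mu - \lambda$ is a non-negative integer combination of the images in $X_M$ of the simple coroots $\alpha^\vee$ with $\alpha$ a simple root of $T$ in $N$.

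My plan to prove this combinatorial reformulation is by induction on the number of simple roots of $T$ in $N$, passing at each step through a maximal parabolic $P' \supsetneq P$ with Levi $M'$ obtained by adjoining one such simple root; the inductive hypothesis applied to the pairs $(M', G)$ and $(M, M')$ (the latter a similar problem \emph{inside} $M'$) should combine to yield the desired integral expression. The principal obstacle is the upgrade from real to integer coefficients. Condition (ii) provides only that $\mu - \lambda$ lies in the \emph{real} cone spanned by the relevant coroot images. To extract integers, one evaluates the fundamental coweights dual to the simple roots of $T$ in $N$ against $\mu - \lambda$: non-negativity of the resulting values follows from $M$-dominance and (ii), while integrality should follow from (i) together with a careful analysis of how the images in $X_M$ of the non-$M$ simple coroots assemble into a saturated sublattice modulo torsion. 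Carrying out this root-system bookkeeping, and verifying that the inductive pieces fit together integrally rather than just over $\R$, is where I expect the technical heart of the argument to lie.
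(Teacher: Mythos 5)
The step you call an ``easy converse''---that $\lambda \stackrel{P}{\leq} \mu$ yields a lift $\nu \in \mathcal{P}_\mu$ ``by unwinding the $P$-expression''---is precisely the hard content of the theorem, and it is not easy. If you write $\varphi_M(\mu)-\lambda = \sum_{i \notin J} c_i\,\varphi_M(\alpha_i^\vee)$ with $c_i \in \Z_{\geq 0}$ and take the naive lift $\nu = \mu - \sum_{i\notin J} c_i \alpha_i^\vee$, this $\nu$ has the right image in $X_M$ and $X_G$ but generally fails to lie in $\Conv(W\mu)$. For instance, in $G=GL_4$ with $M=GL_2\times GL_2$ (so $J=\{1,3\}$), take $\mu=(3,1,1,1)$ and $\lambda=(3,3)\in X_M$; then $\varphi_M(\mu)-\lambda=(1,-1)=\varphi_M(\alpha_2^\vee)$ so $\lambda\stackrel{P}{\leq}\mu$, but the naive lift $\mu-\alpha_2^\vee=(3,0,2,1)$, sorted $(3,2,1,0)$, has top-two sum $5 > 4$ and lies outside $\Conv(W\mu)$. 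A correct lift exists, namely $(2,1,2,1)$, but producing one in general is the whole problem. The paper's solution is to take the $J$-minuscule $J$-dominant lift $z$ of $\lambda$, replace its $R_J$-component coefficients by their fractional parts to get $z'$, verify $z' \in C^+_\mu$, and then use Peterson's $z'$-minuscule Weyl group elements (equivalently the numbers game with a lower cutoff, Propositions \ref{prop2}, \ref{GS}, \ref{winning}) to show $z'$ can be moved to a dominant element while never leaving $C^+_\mu$; this yields $z'\in\Conv(W\mu)$.

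By contrast, the part you propose to spend your inductive effort on---deducing $\lambda \stackrel{P}{\leq}\mu$ from conditions (i) and (ii)---is the routine direction, handled in \cite{kot}, \S4.4. In particular the integrality concern you flag is illusory: the image of $Q^\vee$ in $X_M = X/Q_M^\vee$ is $Q^\vee/Q_M^\vee$, which is a free $\Z$-module with basis $\{\varphi_M(\alpha_i^\vee): i\notin J\}$, so condition (i) already gives you integer $c_i$; condition (ii) then gives non-negativity by pairing against the relevant fundamental coweights. No delicate ``saturation modulo torsion'' analysis is needed there. The genuine difficulty, which your proposal skips, is the existence of an integral lift of $\lambda$ inside $\Conv(W\mu)$.
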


To see that the right-hand side of (\ref{bllah}) corresponds to the right-hand side of Theorem \ref{conjecture}, we refer the reader to Section 4.4 of \cite{kot}.

A variant of Theorem \ref{conjecture}, in the case of quasi-split groups, is proved in the last section (see Theorem \ref{conjecture2}). We remark that Theorem \ref{conjecture} is a statement that is purely a root-theoretic one, so it remains true when we work over other fields of characteristic zero, not just $\mathbb{Q}_p$.

Theorem \ref{conjecture} had been previously proved for $GL_n$ and $GSp_{2n}$ by Kottwitz and Rapoport \cite{krapo} and then for all classical groups by Lucarelli \cite{cathy}. In addition, Wintenberger, using different methods, proved this result for $\mu$ minuscule (see \cite{win}). A more general version of this theorem for $GL_n$ was proved in \cite{qendrim} (Theorem A in loc. cit.) using the theory of toric varieties.

We prove Theorem \ref{conjecture} in the rest of the paper, but we next recall an interesting relation between Theorem \ref{conjecture} and cohomology-vanishing on toric varieties associated with root systems (for more details see \cite{qendrim}, \cite{qendrim2}). Let $\hat{G}$ and $\hat{T}$ be the (Langlands') complex dual group for $G$ and $T$, respectively. Let $Z(\hat{G})$ be the center of $\hat{G}$. Let $V_G$ be the (projective nonsingular) toric variety whose fan is the Weyl fan in $X_* (\hat{T}/Z(\hat{G})) \otimes_{\Z}\R$ and whose torus is $\hat{T}/Z(\hat{G})$. We are interested in the action of $\hat{T}$ on $V_G$, which is obtained using the canonical surjection $\hat{T} \twoheadrightarrow \hat{T}/Z(\hat{G})$ and the action of $\hat{T}/Z(\hat{G})$ on $V_G$.

The theory of toric varieties is famous for its rich dictionary between combinatorial convexity and algebraic geometry. For the toric varieties $V_G$ one also gets group-theoretic information in the picture. For example, we have a one-to-one correspondence between Borel subgroups of $G$ containing $T$ and $\hat{T}$-fixed points in $V_G$.  Of special interest for us are certain globally generated line bundles on $V_G$ which arise from Weyl orbits (for the precise definition see \cite{qendrim}). Recall that $M$ is a Levi subgroup containing $T$. We then have a toric variety $Y_M^G$ for the torus $Z(\hat{M})/Z(\hat{G})$, whose definition we only recall for $\hat{G}$ adjoint (for general $G$ see \cite{akot}, \S23.2). In this case $Z(\hat{M})$ is a subtorus of $\hat{T}$ and so $X_*(Z(\hat{M}))$ is a subgroup of $X_*(\hat{T})$. The collection of cones from the Weyl fan inside $X_*(\hat{T})\otimes_{\mathbb{Z}}\mathbb{R}$ that lie in the subspace $X_*(Z(\hat{M})) \otimes_{\mathbb{Z}}\mathbb{R}$ gives a fan. This is the fan for the complete, nonsingular, projective toric variety $Y_M^G$.

For brevity of our exposition, let us now assume that our parabolic subgroup $P$ is of semisimple rank 1. This implies that the root lattice $R_{\hat{M}}$ is just $\mathbb{Z} \alpha$ for a unique, up to a sign, root $\alpha$ of $\hat{G}$, and that the toric variety $Y_M^G$, which we now denote by $D_{\alpha}$, is a (non-torus-invariant) divisor in $V_G$. The map $\varphi_M$ will now be denoted by $p_{\alpha}$. Passing to the complex dual world and tensoring with $\mathbb{R}$ we get a map which we still denote by $p_{\alpha}:X^*(\hat{T}) \otimes_{\mathbb{Z}}\mathbb{R} \twoheadrightarrow ( X^*(\hat{T})/\mathbb{Z}\alpha )\otimes_{\mathbb{Z}}\mathbb{R} .$

Let $\mathcal{L}$ be a $\hat{T}$- line bundle on $V_G$ that is generated by its sections. Then we have a short exact sequence of sheaves on $V_G$: $$0\longrightarrow \mathcal{J}_{D_{\alpha}} \otimes \mathcal{L} \longrightarrow \mathcal{L} \longrightarrow i_*(\mathcal{L}|_{D_{\alpha}}) \longrightarrow 0,$$
where $\mathcal{J}_{D_{\alpha}}$ is the ideal sheaf of $D_{\alpha}$ and $i$ is the inclusion map $D_{\alpha} \hookrightarrow V_G$. Note that $$H^i(V_G, \mathcal{L})=0,\,\, H^i(V_G,i_*(\mathcal{L}|_{D_{\alpha}}))=H^i(D_{\alpha},\mathcal{L}|_{D_{\alpha}})=0,$$ for all $i>0$, since $\mathcal{L}$ and $\mathcal{L}|_{D_{\alpha}}$ are generated by their sections and $V_G$ and $D_{\alpha}$ are projective toric varieties. Therefore the above short exact sequence gives rise to the long exact sequence
$$\,\,\,\,\,... \longrightarrow H^0(V_G, \mathcal{L}) \stackrel{\varphi}{\longrightarrow} H^0(V_G,i_*(\mathcal{L}|_{D_{\alpha}})) \longrightarrow H^1(V_G,\mathcal{J}_{D_{\alpha}} \otimes \mathcal{L}) \longrightarrow 0.$$ Thus, the surjectivity of the map $\varphi$ is equivalent to $H^1(V_G,\mathcal{J}_{D_{\alpha}} \otimes \mathcal{L})=0.$

The surjectivity of $\varphi$ follows from Theorem \ref{conjecture}, for certain line bundles $\mathcal{L}$.

\begin{theorem} With notation as above, we have that $$H^i(V_{G},\mathcal{J}_{D_{\alpha}} \otimes \mathcal{L})=0, \forall i \geq 1,$$ whenever $\mathcal{L}$ is a globally generated line bundle arising from a Weyl orbit.
\end{theorem}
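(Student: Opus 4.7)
The plan is to deduce the result directly from Theorem \ref{conjecture} by feeding the short exact sequence
$$0 \longrightarrow \mathcal{J}_{D_{\alpha}} \otimes \mathcal{L} \longrightarrow \mathcal{L} \longrightarrow i_*(\mathcal{L}|_{D_{\alpha}}) \longrightarrow 0$$
into a long exact sequence of cohomology and using the toric description of global sections. Since $V_G$ and $D_\alpha$ are projective (smooth) toric varieties and both $\mathcal{L}$ and $\mathcal{L}|_{D_\alpha}$ are globally generated, Demazure vanishing gives $H^j(V_G,\mathcal{L})=0$ and $H^j(D_\alpha,\mathcal{L}|_{D_\alpha})=0$ for every $j\geq 1$. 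Plugging this into the long exact sequence, the group $H^i(V_G,\mathcal{J}_{D_{\alpha}}\otimes \mathcal{L})$ is sandwiched for every $i\geq 2$ between $H^{i-1}(D_\alpha,\mathcal{L}|_{D_\alpha})=0$ and $H^i(V_G,\mathcal{L})=0$, so it vanishes automatically. The real content of the theorem is therefore the case $i=1$, which the discussion above reduces to showing that the restriction map $\varphi:H^0(V_G,\mathcal{L})\to H^0(D_\alpha,\mathcal{L}|_{D_\alpha})$ is surjective.

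To handle $i=1$ I would apply the standard dictionary between equivariant sections of a line bundle on a toric variety and the lattice points of its moment polytope. Because $\mathcal{L}$ arises from the Weyl orbit of $\mu$, the $\hat{T}$-weights occurring in $H^0(V_G,\mathcal{L})$ are (with multiplicity one) exactly the lattice points of $\Conv(W\mu)\cap X$ that share the image of $\mu$ in $X_G$, i.e.\ the elements of $\mathcal{P}_\mu$. Similarly, the $\hat{T}$-weights occurring in $H^0(D_\alpha,\mathcal{L}|_{D_\alpha})$ are precisely the lattice points of $X_M$ that lie in $pr_M(\Conv(W\mu))$ and have the correct image in $X_G$. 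Under these identifications $\varphi$ sends the weight vector of weight $\nu \in \mathcal{P}_\mu$ to the weight vector of weight $\varphi_M(\nu)$.

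Combining the two previous paragraphs, the surjectivity of $\varphi$ is exactly the assertion that every $\nu\in X_M$ satisfying conditions (i) and (ii) on the right-hand side of Theorem \ref{conjecture} lies in $\varphi_M(\mathcal{P}_\mu)$, which is precisely the content of Theorem \ref{conjecture}. The main (and essentially only) subtlety in the plan is the verification that the toric-geometric restriction map agrees, under Langlands duality, with $\varphi_M$; this is the bookkeeping described in \cite{akot}, \S23.2, using the identifications $X^*(\hat{T}/Z(\hat{G}))\subset X^*(\hat{T})=X$ and $X^*(Z(\hat{M})/Z(\hat{G}))\subset X_M$. Once this identification is made, the $i=1$ case falls out of Theorem \ref{conjecture} and the theorem is proved.
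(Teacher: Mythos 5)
Your proposal is correct and follows the same route that the paper sketches immediately before and after the theorem statement: reduce to surjectivity of $\varphi$ via the short exact sequence and Demazure vanishing (with the observation that the $i\geq 2$ cases vanish automatically), then translate the restriction map on global sections into a statement about lattice points in the relevant polytopes using the standard toric dictionary, and finally recognize this statement as precisely Theorem \ref{conjecture}.
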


Let us just mention that the proof of this theorem uses, among other facts, a concrete description for the dimension of the space of global sections of a line bundle on a toric variety in terms of lattice points in certain polytopes (cf. \cite{fulton}, pg.66).

We note that in the case of the group $G=GL_n$, a stronger result than that of the previous theorem is true. In fact, in \cite{qendrim} it is proved that $$H^i(V_{GL_n},\mathcal{J}_{D_{\alpha}} \otimes \mathcal{L})=0,\forall i \geq 1$$ whenever $\mathcal{L}$ is globally generated.

In the end, let us describe how our paper is organized. In Section 2 we prove Theorem \ref{conjecture} in the simply-laced case. Some auxiliary results used in this proof are treated in the next section. An interesting feature of the proof of Theorem \ref{conjecture} is that its last part involves Peterson's notion of minuscule Weyl group elements (cf. \cite{stem}) or, equivalently, the numbers game with a cutoff \cite{GS} --- this is a modified version of the so-called Mozes' game of numbers (cf. \cite{mozes}). Section 4 is devoted to the proof of Theorem \ref{conjecture} in the non-simply laced case where we use a folding argument to deduce the result from the analogous statement for the simply-laced one. The last section contains the proof of a converse to Mazur's Inequality for quasi-split groups.

\vspace{0.2in}

{\bf Acknowledgments:} It is with special pleasure and great gratitude that we thank Robert Kottwitz for his time, invaluable advice and comments, and for carefully reading earlier versions of this paper. We heartily thank Travis Schedler for allowing the inclusion in this paper of joint results appearing on Section 3 and for very fruitful discussions on the numbers game. We also thank Michael Rapoport and Ulrich G\"ortz for their comments on an earlier version of this paper, and  Eva Viehmann for helpful conversations. We thank Artan Berisha for help with a computer program. Part of this work was supported by an EPDI Fellowship and a Clay Liftoff Fellowship. We thank the University of Chicago and the Max Planck Institute of Mathematics in Bonn for their hospitality.

\section{The case of simply-laced root systems}

Since the statement of Theorem \ref{conjecture} only involves root systems and since we will be using facts from \cite{bourbaki}, we shall rewrite the statement of our main result so that it conforms to the notation from \cite{bourbaki}. Moreover, we will be working with roots, instead of coroots (which can also be interpreted to mean that we will be working with the Langlands' complex dual group of $G$, instead of with the group $G$ itself).

Suppose that $R$ is a (reduced, irreducible) root system and $W$ is its Weyl group. Denote by $P(R)$ and $Q(R)$ the weight and radical-weight lattices for $R$, respectively. Let $\Delta := \{\alpha_i : i \in I \}$, where $I:= \{1,\ldots,n\}$, be the simple roots (for some choice) in $R$. Let $\emptyset \neq J \subsetneqq  I$ and consider the sub-root system, denoted $R_J$, corresponding to the set of roots $\{\alpha_j : j \in J \}$ (this corresponds to the Levi group $M$ from the Introduction). Let $Q(R_J)$ be defined similarly to $Q(R)$.

Let $\mu \in P(R)$ be a dominant weight, i.e., $\langle \mu , \alpha_i^{\vee} \rangle \geq 0 , \, \forall i \in I$, where $\alpha_i^{\vee}$ is the coroot corresponding to $\alpha_i$, and $\langle \, , \rangle$ stands for the canonical pairing between weights and coweights of $R$. Then consider the convex hull $\Conv (W\mu)$ inside $P(R)\otimes_{\mathbb{Z}}\mathbb{R}$. Let $\varphi$ and $\varphi_J$ be the canonical projections of $P(R)$ onto $P(R)/Q(R)$ and onto $P(R)/Q(R_J)$, respectively. Recall that we defined $$\mathcal{P}_{\mu}: = \left\{\nu\in P(R): (i)\, \varphi(\nu)=\varphi(\mu); \, \emph{and} \,\,(ii)\, \nu\in \Conv\left(W\mu\right)\right\}.$$ If we write $pr_J$ for the natural projection $$P(R)\otimes_{\mathbb{Z}}\mathbb{R} \rightarrow (P(R)/Q(R_J)) \otimes_{\mathbb{Z}}\mathbb{R},$$ induced by $\varphi_J$, then Theorem \ref{conjecture} can be reformulated as follows.

\begin{theorem}
We have that $$\varphi_J \left(\mathcal{P}_\mu\right)=\left\{y \in P(R)/Q(R_J) :(i)\,y, \mu \,\,\emph{have the same image in}\,\, P(R)/Q(R);\right.$$
$$\left. \hspace{2cm} (ii)\, \emph{the image of}\, \,y \, \emph{in} \,(P(R)/Q(R_J)) \otimes_{\mathbb{Z}}\mathbb{R} \,\emph{lies in}\, pr_J\left(\Conv \left( W\mu \right) \right)\right\}.$$

\label{thm}
\end{theorem}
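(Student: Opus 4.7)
The inclusion $\varphi_J(\mathcal{P}_\mu) \subseteq \{\cdots\}$ is formal. For $\nu \in \mathcal{P}_\mu$, the image $y := \varphi_J(\nu)$ has the same class as $\mu$ in $P(R)/Q(R)$ because $Q(R_J) \subseteq Q(R)$ makes $\varphi$ factor through $\varphi_J$, and $pr_J(y) \in pr_J(\Conv(W\mu))$ holds by the very definition of $\mathcal{P}_\mu$.

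The substance is in the reverse inclusion. Fix $y$ satisfying (i) and (ii). Its fiber $\varphi_J^{-1}(y)$ is a coset $\nu_0 + Q(R_J) \subseteq P(R)$, and by (i) this coset sits entirely inside $\mu + Q(R)$. Hence it suffices to exhibit one lift $\nu \in \varphi_J^{-1}(y)$ lying in $\Conv(W\mu)$; such a $\nu$ is then automatically in $\mathcal{P}_\mu$. Condition (ii) supplies a \emph{real} point $\xi$ in the nonempty bounded polytope $$\Pi := \Conv(W\mu) \cap \bigl(\nu_0 + Q(R_J) \otimes_{\Z} \R\bigr),$$ so the task reduces to producing an integer point of the affine lattice $\nu_0 + Q(R_J)$ inside $\Pi$.

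My plan for the simply-laced case is to choose $\nu_0$ to be an $R_J$-dominant lift of $y$ and then modify it by a sequence of elementary moves $\nu \mapsto \nu + \alpha_j$ ($j \in J$) that preserve the coset $\nu_0 + Q(R_J)$. The rule selecting which $\alpha_j$ to add at each step, and the stopping condition, will be dictated by the numbers game with a cutoff of \cite{GS}, equivalently Peterson's theory of minuscule Weyl group elements (cf.\ \cite{stem}), with the cutoff anchored at $\mu$. The auxiliary results of Section 3 are to show that the game terminates and that the resulting $\nu$ satisfies $\langle \lambda, \nu^+ \rangle \leq \langle \lambda, \mu \rangle$ for every dominant coweight $\lambda$, where $\nu^+$ is the $W$-dominant conjugate of $\nu$; these inequalities are the standard criterion for $\nu \in \Conv(W\mu)$.

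The main obstacle is verifying that the game's trajectory actually lands inside $\Conv(W\mu)$, not merely in the ambient affine subspace $\nu_0 + Q(R_J) \otimes \R$. The simply-laced hypothesis is essential here: with all roots of equal length, the elementary firing moves interact uniformly with the face structure of the convex hull and the combinatorial analysis of the numbers game applies cleanly. The non-simply-laced case will then be reduced (in Section 4) to the simply-laced one by a folding argument, realizing $R$ as the fixed-point root system of a diagram automorphism of a simply-laced root system $\widetilde R$ and transferring the conclusion.
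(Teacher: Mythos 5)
Your strategy is aligned with the paper in its choice of tools --- the numbers game with a cutoff from \cite{GS}, Peterson's minuscule Weyl group elements from \cite{stem}, and the characterization of $\Conv(W\mu)$ via inequalities against fundamental coweights --- but the concrete construction that makes the argument go through is missing, and the plan as stated has a genuine gap.

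The paper does not search iteratively within the fiber $\varphi_J^{-1}(y)$. It produces a single, explicit candidate lift: take $z$ to be the unique $J$-minuscule, $J$-dominant lift of $y$ (Bourbaki, Ch.\ VIII, \S7, Prop.\ 8), write $z = y + \sum_{j\in J} k_j \alpha_j$, and replace $z$ by its \emph{fractional part} $z' = y + \sum_{j\in J} k'_j\alpha_j$, where $k'_j = k_j - \lfloor k_j\rfloor$. The point of this normalization --- which the paper flags as ``essential'' --- is that it makes $z' \in C^+_\mu := \{x : \langle x,\omega_i\rangle \le \langle\mu,\omega_i\rangle \ \forall i\}$ immediate: for $i\notin J$ this is just the hypothesis $y \in pr_J(\Conv(W\mu))$, and for $i\in J$ it follows because $k'_i \in [0,1)$ together with the integrality of $\langle\mu,\omega_i\rangle - \langle y,\omega_i\rangle$ (coming from condition (i)) forbids $\langle z',\omega_i\rangle$ from overshooting $\langle\mu,\omega_i\rangle$. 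Without the fractional-part step you have no control on the $J$-coordinates of your lift, and membership in $C^+_\mu$ --- the anchor of the whole argument --- is simply not available.

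Once $z' \in C^+_\mu$ is in hand, the paper does not restrict to moves $\nu\mapsto\nu+\alpha_j$ with $j\in J$: it applies a $z'$-minuscule element $w'\in W$ (using reflections $s_i$ for \emph{all} $i\in I$) chosen so that $w'(z')$ is dominant. Proposition \ref{prop2} shows such a $w'$ cannot leave $C^+_\mu$, so $w'(z')$ dominant and in $C^+_\mu$ forces $z' \in \Conv(W\mu)$. The existence of the required $w'$ is equivalent (Proposition \ref{GS}) to the single condition $\langle z',\alpha^\vee\rangle \ge -1$ for all positive coroots $\alpha^\vee$, which is the technical heart of the paper (Propositions \ref{winning} and \ref{genkrprop}, Lemma \ref{jminlem}) and is exactly what your ``auxiliary results of Section 3'' placeholder needs to deliver. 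Your sketch restricts the elementary moves to $j\in J$ to stay in the fiber and offers no stopping criterion nor a proof that the trajectory lands in $\Conv(W\mu)$; the paper instead picks the right lift at the outset and reduces the whole problem to the $\ge -1$ bound. You should also be careful that your $\nu_0$ ($R_J$-dominant lift) is not specific enough: you need the $J$-\emph{minuscule} $J$-dominant lift and then its fractional part, and it is $z'$ (not $z$) that enjoys the crucial $C^+_\mu$ membership.
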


Note that to prove Theorem \ref{thm} it is sufficient to prove that the right-hand side is contained in the left-hand side, since the converse is clear.

Suppose that $y$ is an element of the set appearing on the right-hand side in Theorem \ref{thm}. Without loss of generality, we can assume that $y$ is dominant. We then have a unique element $z \in P(R)$ which is $J$-minuscule, $J$-dominant and such that $pr_J(z)=y$ (cf. Proposition 8, \S7, Ch. VIII in \cite{bourbaki2}). We recall that $z$ being $J$-minuscule means that $\langle z , \alpha^{\vee} \rangle  \in \{ -1, 0, 1\},$ for all roots $\alpha$ in $R_J$, and $z$ being $J$-dominant means that $\langle z , \alpha_j^{\vee} \rangle \geq 0, \, \forall j \in J$. (If we do not modify the adjectives dominant and minuscule, then they will always mean $I$-dominant and $I$-minuscule.)

We can consider $(P(R)/Q(R_J)) \otimes_{\mathbb{Z}}\mathbb{R}$ as a subspace of $P(R) \otimes_{\mathbb{Z}}\mathbb{R}$, and then we can write $$z=y+ \sum_{j\in J} k_j \alpha_j,$$ for some non-negative reals $k_j$. Instead of $z$, consider $$z'=y+ \sum_{j\in J} k^{'}_j \alpha_j,$$ where, for each $j$, $k^{'}_j$ stands for the fractional part of $k_j$. Clearly, $pr_J(z')=y$. Then Theorem \ref{thm} follows from the following result.

\begin{proposition}
The element $z'$ lies in $\mathcal{P}_{\mu}.$
\label{prop}
\end{proposition}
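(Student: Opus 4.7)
The plan is to verify the two defining conditions of $\mathcal{P}_\mu$ for the candidate $z'$. The first condition, ``$z'$ and $\mu$ have the same image in $P(R)/Q(R)$'', is essentially formal. Since $z \in P(R)$ by the Bourbaki proposition cited, and $z - z' = \sum_{j\in J} \lfloor k_j\rfloor \alpha_j$ lies in $Q(R_J) \subseteq P(R)$, the element $z'$ is integral. Moreover $z' - z \in Q(R_J) \subseteq Q(R)$ gives $\varphi(z') = \varphi(z)$; and $\varphi(z) = \varphi(\mu)$ because $\varphi$ factors as $\varphi_J$ followed by the quotient $\pi:P(R)/Q(R_J)\twoheadrightarrow P(R)/Q(R)$, and $\pi(y) = \varphi(\mu)$ by the hypothesis on $y$.

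The substantive content is to show $z' \in \Conv(W\mu)$. Hypothesis (ii) on $y$ supplies some $x \in \Conv(W\mu)$ with $pr_J(x) = y$, so the fibre $pr_J^{-1}(y) = y + Q(R_J)\otimes_{\Z}\R$ meets $\Conv(W\mu)$ in a nonempty convex set $C$. Identifying $(P(R)/Q(R_J))\otimes\R$ with the orthogonal complement $Q(R_J)^{\perp}\subseteq P(R)\otimes\R$, the subgroup $W_J$ fixes $y$ pointwise and acts on the fibre by its usual reflection action on $Q(R_J)\otimes\R$; since $\Conv(W\mu)$ is $W$-stable, $C$ is $W_J$-stable and convex. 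Averaging the orbit $W_J x$, and using that the only $W_J$-fixed point in $Q(R_J)\otimes\R$ is the origin, already places $y$ in $C$ (and hence $\Conv(W_J x)\subseteq C$), but this alone is much weaker than $z' \in C$.

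To bridge the gap and land on the specific fractional-parts point $z'$, I would exploit the rigidity of $z$: being $J$-minuscule and $J$-dominant pins the $k_j$'s (and hence the $k'_j$'s) down to a very restricted set of values. Starting from a suitably chosen $W_J$-translate of $x$ (or from $y$ itself), I would run a sequence of reflection moves along simple roots in $J$, each step keeping us inside $C$, and designed so that the trajectory terminates exactly at $z'$. This is precisely Peterson's theory of minuscule Weyl group elements, equivalently the numbers game with a cutoff from \cite{GS}, and the auxiliary combinatorial results of Section 3 should provide the needed existence and termination.

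The main obstacle is this final combinatorial step: translating the geometric statement ``$z' \in \Conv(W\mu)$'' into a legal play of the cutoff numbers game starting from $\mu$ and ending at the prescribed configuration of fractional parts $(k'_j)_{j\in J}$, and verifying that the play never exits $\Conv(W\mu)$. The simply-laced hypothesis is important here, as the Peterson/numbers-game framework is cleanest in that setting; the non-simply-laced case is then to be reduced to the simply-laced one by a folding argument in Section 4.
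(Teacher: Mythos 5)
Your first paragraph (the $P(R)/Q(R)$ condition) is correct and matches what the paper does. The substantive part, $z' \in \Conv(W\mu)$, is where your sketch stops short and diverges from the paper's argument in ways that matter.

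The main gap: you propose to start from some point of $C := pr_J^{-1}(y)\cap \Conv(W\mu)$ (or from $y$ itself), move by reflections along simple roots of $J$, and stay inside $C$ until you hit $z'$. This plan has two fatal problems. First, $W_J$ applied to $y$ fixes $y$ (you observed this yourself), and $W_J$ applied to some other $x$ in the fibre stays in the $W_J$-orbit of $x$, which has no reason to contain $z'$; moreover $z'-y$ has non-integral coefficients, so no sequence of integral reflection steps from an integral point lands on $z'$. Second, and more fundamentally, the paper never attempts to stay inside $\Conv(W\mu)$ along the way. The actual mechanism is the opposite of yours: one works inside the \emph{larger} half-space region $C^+_\mu := \{x : \langle x,\omega_i\rangle \leq \langle\mu,\omega_i\rangle \ \forall i\}$, checks directly that $z'\in C^+_\mu$ (this is exactly where the restriction to fractional parts $k'_j\in[0,1)$ is used, combined with $\langle\mu,\omega_j\rangle - \langle y,\omega_j\rangle \in\Z$), and then produces a $z'$-minuscule $w'\in W$ with $w'(z')$ dominant. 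Since $z'$-minuscule moves preserve $C^+_\mu$ (Proposition~\ref{prop2}) and a dominant element of $C^+_\mu$ lies in $\Conv(W\mu)$, one gets $w'(z')\in\Conv(W\mu)$ and hence $z'\in\Conv(W\mu)$ by $W$-stability. The reflections involved are $z'$-minuscule over the \emph{whole} index set $I$, not just $J$, so restricting to ``reflection moves along simple roots in $J$'' cannot produce the needed dominance.

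You correctly name the Peterson/numbers-game framework, but the specific ingredients are missing: (a) the cone $C^+_\mu$ as the good ambient region, (b) the verification $z'\in C^+_\mu$ (which is precisely why $z'$, not $z$, must be used), (c) the \cite{GS} criterion that $z'$ admits a $z'$-minuscule $w'$ making it dominant iff $\langle z',\alpha^\vee\rangle\geq -1$ for all positive coroots, and (d) the proof of this last inequality (Proposition~\ref{winning}, which occupies all of Section~3 and is the hard combinatorial content). Without (d) in particular the argument does not close, and your sketch gives no route to it.
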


Perhaps we should mention here that a similar proposition (for classical groups) was proved in \cite{cathy}, but there $z$ was shown to lie in $\mathcal{P}_{\mu}$ and $z'$ was not considered at all. For our proof, as will become apparent shortly, it is essential that we consider $z'$ instead of $z$. It turns out, however, that (at least in the simply-laced cases) $z$ and $z'$ are in the same Weyl orbit (Lemma \ref{jminlem}), and therefore the above proposition remains true when $z'$ is replaced by $z$.

Since we have assumed that $\mu$ and $y$ have the same image in $P(R)/Q(R)$, we immediately get that $\mu$ and $z'$ also have the same image in $P(R)/Q(R)$. Thus, to prove Proposition \ref{prop}, we only need to show that $z' \in \Conv\left(W\mu\right)$, which will indeed occupy the rest of the paper.

Before we start with some auxiliary results, let us make an important assumption. We will assume that $R$ is a simply-laced root system. The result of Theorem \ref{thm} for the non-simply laced root systems will follow from the analogous result for the simply-laced root systems by the well-known argument of folding. This is carried out in Section \ref{nonsimplylaced}.

One of the difficulties is that the element $z'$, like $z$, is not dominant in general. So, we let $w' \in W$ be such that $w'(z')$ is dominant. Then to show that $z' \in \Conv\left(W\mu\right)$, it suffices to prove that $\langle w'(z'), \omega_i \rangle \leq \langle \mu , \omega_i \rangle, \forall i \in I$, where, for all $i \in I$, $\omega_i$ stands for the fundamental coweight corresponding to $\alpha_i$. The strategy for the proof of these inequalities will be to construct an element $w'$ as above in such a way that we get the inequalities for free or with very little work.

Let us first introduce some more terminology. For $\lambda \in P(R)$ and $w \in W$, we say that $w$ is \emph{$\lambda$-minuscule} if there is a reduced expression $w=s_{i_1}s_{i_2}\cdots s_{i_t}$ such that $$s_{i_r}s_{i_{r+1}}\cdots s_{i_t} \lambda = \lambda + \alpha_{i_r} + \alpha_{i_{r+1}} + \ldots + \alpha_{i_t},\,\, 1 \leq r \leq t,$$ where for any $i$, $s_i \in W$ stands for the simple reflection corresponding to $\alpha_i$. It is easily seen that $w$ is $\lambda$-minuscule if and only if $$\langle s_{i_{r+1}}\cdots s_{i_t} \lambda , \alpha_{i_r}^{\vee} \rangle = -1, \,\, 1 \leq r \leq t.$$

Note that usually one defines $\lambda$-minuscule weights by requiring, in the last equalities, that the left-hand side equals to $+1$ as opposed to $-1$. For more on minuscule Weyl group elements, a notion invented by Peterson, see for example \cite{stem}. We point out that the notion of a Weyl group element $w$ being $\lambda$-minuscule does not depend on the choice of the reduced expression for $w$. This statement is proved in \cite{stem}, Proposition 2.1.

The next result reveals what kind of $w' \in W$ we are looking for and the reason for that.

\begin{proposition}
Let $C^+_{\mu}:= \{ x \in P(R) \otimes_{\Z} \R \, |\, \langle x, \omega_i \rangle \leq \langle \mu, \omega_i \rangle, \forall i\in I \}$ and suppose that $u \in C^+_{\mu}$. Then $w(u)$ lies in $C^+_{\mu}$ for all $w \in W$ such that $w$ is $u$-minuscule.
\label{prop2}
\end{proposition}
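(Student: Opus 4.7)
The plan is to induct on $t := \ell(w)$. The base case $t=0$ is immediate. For the inductive step, fix a reduced expression $w = s_{i_1}\cdots s_{i_t}$ and set $w' := s_{i_2}\cdots s_{i_t}$. Since the tail of a $u$-minuscule expression is again $u$-minuscule, the inductive hypothesis applied to $w'$ gives $v := w'(u) \in C^+_\mu$. It then suffices to show that $s_{i_1}(v) \in C^+_\mu$.

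For every $j \neq i_1$ one has $s_{i_1}(\omega_j) = \omega_j$, and hence $\langle s_{i_1}(v), \omega_j\rangle = \langle v, \omega_j\rangle \leq \langle \mu, \omega_j\rangle$; the defining inequalities of $C^+_\mu$ indexed by $j\neq i_1$ are preserved automatically. The only inequality that can fail is the one indexed by $i_1$. Using $s_{i_1}(\omega_{i_1}) = \omega_{i_1} - \alpha_{i_1}^\vee$ together with the $u$-minuscule identity $\langle v, \alpha_{i_1}^\vee\rangle = -1$, I compute
$$\langle s_{i_1}(v), \omega_{i_1}\rangle = \langle v, \omega_{i_1}\rangle - \langle v, \alpha_{i_1}^\vee\rangle = \langle v, \omega_{i_1}\rangle + 1,$$
so the task reduces to establishing $\langle \mu - v, \omega_{i_1}\rangle \geq 1$.

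For this, I would write $\mu - v = \sum_j m'_j \alpha_j$ in the basis of simple roots, so that $m'_j = \langle \mu - v, \omega_j\rangle$. The hypothesis $v \in C^+_\mu$ forces $m'_j \geq 0$ for every $j$, and in the setting where the proposition is used (namely when $\mu$ and $u$ lie in the same coset modulo $Q(R)$, as is the case in the application to Proposition \ref{prop}) the coefficients $m'_j$ are integers. On the other hand, dominance of $\mu$ combined with the minuscule relation $\langle v, \alpha_{i_1}^\vee\rangle = -1$ gives $\langle \mu - v, \alpha_{i_1}^\vee\rangle \geq 1$. Expanding in terms of the Cartan integers yields
$$1 \leq \langle \mu - v, \alpha_{i_1}^\vee\rangle = 2 m'_{i_1} + \sum_{j \neq i_1} m'_j \langle \alpha_j, \alpha_{i_1}^\vee\rangle,$$
in which every off-diagonal term $\langle \alpha_j, \alpha_{i_1}^\vee\rangle$ is $\leq 0$. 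Were $m'_{i_1} = 0$, the right-hand side would be $\leq 0$, contradicting the lower bound $1$; hence $m'_{i_1} \geq 1$, which is exactly the required inequality.

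The substantive step, and the only one I expect to require genuine insight, is the passage from the slack that one wants (in the $\omega_{i_1}$-direction) to the slack that the hypotheses actually provide (in the $\alpha_{i_1}^\vee$-direction). This passage is carried by the dominance of $\mu$ together with the non-positivity of the off-diagonal Cartan integers; the rest of the argument is bookkeeping around the single simple reflection peeled off at each inductive step.
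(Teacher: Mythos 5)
Your proof is correct and follows essentially the same approach as the paper's: induction on $\ell(w)$, peeling off one simple reflection per step, reducing to a single coweight inequality, and establishing it via the Cartan-integer expansion of $\langle \mu - v, \alpha_{i_1}^\vee\rangle$ together with dominance of $\mu$ and nonnegativity of the $m'_j$ (the paper frames this step as a proof by contradiction, you as a direct lower bound, but it is the same computation). You are also right to flag that the passage from $m'_{i_1}>0$ to $m'_{i_1}\geq 1$ needs the congruence $\mu\equiv u \pmod{Q(R)}$: the paper's proof uses this integrality silently --- it holds for $u=z'$, the only place the proposition is invoked --- and the statement as literally written would fail without it (already for $R=A_1$, $\mu=0$, $u=-\varpi_1$ the negative fundamental weight).
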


We prove this result below, but first note that $z' \in C^+_{\mu}$. Indeed, recall that $z'=y+ \sum_{j\in J} k^{'}_j \alpha_j.$ All the numbers $k^{'}_i$ belong to the half-open interval $[0,1)$. We would like to prove that $$\langle z' , \omega_{i} \rangle \leq  \langle \mu , \omega_{i} \rangle , \, \forall i \in I.$$ If $ i \in I \setminus J$, then $\langle z', \omega_i \rangle = \langle y, \omega_i \rangle$. But, $\langle y, \omega_i \rangle \leq \langle \mu, \omega_i \rangle, \, \forall i \in I$, since $y$ belongs to the convex hull $\Conv(W\mu)$. Therefore, $\langle z', \omega_i \rangle \leq \langle \mu, \omega_i \rangle, \, \forall i \in I \setminus J.$

If $i \in J$, then $\langle z', \omega_i \rangle = \langle y, \omega_i \rangle + k^{'}_i$. But, $k^{'}_i \in [0,1)$, $\langle y, \omega_i \rangle \leq \langle \mu, \omega_i \rangle$, and, since $y$ and $\mu$ have the same image in $P(R)/Q(R)$, we have $\langle \mu, \omega_i \rangle - \langle y, \omega_i \rangle \in \Z$, thus we get that $$\langle z', \omega_i \rangle \leq \langle \mu, \omega_i \rangle, \, \forall i \in J,$$
and hence $z' \in C^+_{\mu}$. (The last inequalities would not be trivial if we had used $z$ instead of $z'$ because the coefficients $k_i$ may be equal to or bigger than $1$.)

Using Proposition \ref{prop2}, we find that Proposition \ref{prop} follows if we show that there exists an element $w'\in W$ such that $w'(z')$ is dominant and $w'$ is $z'$-minuscule. Before we tackle this problem, let us prove Proposition \ref{prop2}.

\emph{Proof of Proposition \ref{prop2}.}
Suppose that the conditions of the Proposition are satisfied. Let $w\in W$ be $u$-minuscule and suppose that a reduced expression for $w$ is given by $s_{i_1}s_{i_2}\cdots s_{i_t}$. We use induction on $t$, the length of $w$, to prove that $w(u)$ lies in the cone $C^+_{\mu}$, with the case $t=0$ (i.e., $w=id$) already proved. Assume that $s_{i_{r+1}}\cdots s_{i_t}(u)$ lies in $C^+_{\mu}$. We would like to prove that the element $s_{i_r}s_{i_{r+1}}\cdots s_{i_t}(u)$ also lies in $C^+_{\mu}$. Since $\langle s_{i_{r+1}}\cdots s_{i_t}(u) , \alpha_{i_r} \rangle =-1$, we apply the simple reflection $s_{i_r}$ to $s_{i_{r+1}}\cdots s_{i_t}(u)$ to get $s_{i_r}s_{i_{r+1}}\cdots s_{i_t}(u) = s_{i_{r+1}}\cdots s_{i_t}(u) + \alpha_{i_r}$. Then, clearly, for any $i \in I \setminus \{ i_r\}$, we have $\langle s_{i_r}s_{i_{r+1}}\cdots s_{i_t}(u) , \omega_i \rangle = \langle  s_{i_{r+1}}\cdots s_{i_t}(u), \omega_i \rangle \leq \langle \mu, \omega_i \rangle.$ For $i=i_r$ we have that $\langle s_{i_r}s_{i_{r+1}}\cdots s_{i_t}(u) , \omega_{i_r} \rangle = \langle  s_{i_{r+1}}\cdots s_{i_t}(u), \omega_{i_r} \rangle + 1.$ Since $\langle  s_{i_{r+1}}\cdots s_{i_t}(u), \omega_{i_r} \rangle \leq \langle  \mu, \omega_{i_r} \rangle,$ we will be done if we show that we cannot have $\langle  s_{i_{r+1}}\cdots s_{i_t}(u), \omega_{i_r} \rangle = \langle  \mu, \omega_{i_r} \rangle.$

For a contradiction, suppose that $\langle  s_{i_{r+1}}\cdots s_{i_t}(u), \omega_{i_r} \rangle = \langle  \mu, \omega_{i_r} \rangle.$ Then, since $s_{i_{r+1}}\cdots s_{i_t}(u) \in C^+_{\mu}$ and $\mu$ is dominant, there exist non-negative reals $a_i , i \in I \setminus \{ i_r \}$, so that $$s_{i_{r+1}}\cdots s_{i_t}(u) = \mu - \sum_{i \in I \setminus \{ i_r \}}a_i \alpha_i ,$$ and this contradicts our assumption that $\langle s_{i_{r+1}}\cdots s_{i_t}(u), \alpha^{\vee}_{i_r} \rangle = -1$, because $\langle \mu, \alpha^{\vee}_{i_r} \rangle \geq 0,$ $a_i$'s are non-negative, and $\langle \alpha_i, \alpha^{\vee}_{i_r} \rangle \leq 0, \forall i \neq i_r$.
$\square$

Recall that we have reduced the proof of Proposition \ref{prop} to showing that there exists an element $w' \in W$ such that $w'(z')$ is dominant, and $w'$ is $z'$-minuscule. Initially, this problem was proved by the author on a case-by-case basis, but, the following result from \cite{GS} greatly simplifies the proof.

\begin{proposition} \emph{(}\cite{GS}\emph{)}\label{GS}
Let $\lambda \in P(R)$. Then there exists an element $w\in W$ such that $w(\lambda)$ is dominant and $w$ is $\lambda$-minuscule if and only if
\begin{equation}\label{condition}
\langle \lambda, \alpha^{\vee} \rangle \geq -1,
\end{equation}
for all positive coroots $\alpha^{\vee}$ of $R$.
\end{proposition}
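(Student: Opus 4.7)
The plan is to prove the equivalence by handling the two directions separately, each via a short induction.

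For the \emph{necessity} direction, I would induct on $t = \ell(w)$, peeling off the last simple reflection of a reduced expression $w = s_{i_1}\cdots s_{i_t}$. Setting $\lambda'' := s_{i_t}\lambda$ and $w'' := s_{i_1}\cdots s_{i_{t-1}}$, the definition of $\lambda$-minuscule immediately shows that $w''$ is $\lambda''$-minuscule and that $w''(\lambda'') = w(\lambda)$ is still dominant, so by induction $\langle \lambda'',\beta^{\vee}\rangle \geq -1$ for every positive coroot $\beta^{\vee}$. Given any positive coroot $\alpha^{\vee}$, I would split into two cases. If $\alpha^{\vee} = \alpha_{i_t}^{\vee}$, the case $r=t$ in the definition of $\lambda$-minuscule yields $\langle \lambda,\alpha_{i_t}^{\vee}\rangle = -1$. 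Otherwise $s_{i_t}\alpha^{\vee}$ is again a positive coroot (since $s_{i_t}$ permutes the positive coroots other than $\alpha_{i_t}^{\vee}$), and
$$\langle \lambda,\alpha^{\vee}\rangle = \langle s_{i_t}\lambda, s_{i_t}\alpha^{\vee}\rangle = \langle \lambda'', s_{i_t}\alpha^{\vee}\rangle \geq -1$$
by the inductive hypothesis.

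For the \emph{sufficiency} direction, I would construct $w$ by a greedy procedure. Whenever the current $\lambda$ is not dominant, pick any simple index $i$ with $\langle \lambda,\alpha_i^{\vee}\rangle < 0$; the standing hypothesis together with integrality of $\langle \lambda,\alpha_i^{\vee}\rangle$ forces this pairing to equal exactly $-1$. Replace $\lambda$ by $s_i\lambda = \lambda + \alpha_i$ and repeat. Two items need verification. First, the hypothesis $\langle \lambda,\beta^{\vee}\rangle \geq -1$ is preserved: for $\beta^{\vee} = \alpha_i^{\vee}$ one has $\langle s_i\lambda,\alpha_i^{\vee}\rangle = +1$, and for any other positive coroot $\beta^{\vee}$ the coroot $s_i\beta^{\vee}$ is again positive, so $\langle s_i\lambda,\beta^{\vee}\rangle = \langle \lambda,s_i\beta^{\vee}\rangle \geq -1$. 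Second, the process terminates: writing $\rho^{\vee}$ for the sum of fundamental coweights, each step increases $\langle \lambda,\rho^{\vee}\rangle$ by $\langle \alpha_i,\rho^{\vee}\rangle = 1$, and this quantity is bounded above on the finite orbit $W\lambda$ by its value at the dominant representative. Reading off the reflections in reverse order of application gives a reduced expression $w = s_{i_1}\cdots s_{i_t}$ that is $\lambda$-minuscule by the very construction, with $w\lambda$ dominant.

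I do not anticipate any serious obstacle. The single technical input used at both stages is the classical fact that $s_i$ permutes $R^{\vee}_+ \setminus \{\alpha_i^{\vee}\}$. The one genuinely substantive observation is that the hypothesis $\geq -1$, combined with integrality, pins the pairing against any simple root responsible for non-dominance to \emph{exactly} $-1$; this is precisely what distinguishes the greedy construction here (producing a $\lambda$-minuscule $w$) from the standard argument that merely brings $\lambda$ to its dominant representative.
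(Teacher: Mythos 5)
Your proposal is correct, and for the sufficiency direction (the only one the paper actually proves, as it is the only one used later) it is the same greedy algorithm the paper employs. The one difference is the bookkeeping: the paper tracks the finite ``defect set'' $\mathcal{S}_{\lambda} = \{(\alpha,\langle\lambda,\alpha^{\vee}\rangle) : \alpha \in R^+, \langle\lambda,\alpha^{\vee}\rangle<0\}$ and exhibits a bijection $\mathcal{S}_{\lambda}\setminus\{(\alpha_i,-1)\}\to\mathcal{S}_{s_i\lambda}$ that preserves the second coordinate, which simultaneously shows that the hypothesis $\geq -1$ is preserved and that the process terminates in exactly $|\mathcal{S}_{\lambda}|$ steps; you instead verify preservation of the hypothesis and termination separately, using $\langle\lambda,\rho^{\vee}\rangle$ as a strictly increasing potential bounded on the finite orbit. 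The paper's version has the minor advantage that it makes it transparent that the resulting word $s_{i_1}\cdots s_{i_t}$ has length exactly $|\mathcal{S}_{\lambda}|$ and is therefore reduced (since, whenever $w\lambda$ is dominant, $\{\alpha\in R^+:\langle\lambda,\alpha^{\vee}\rangle<0\}\subseteq\{\alpha\in R^+ : w\alpha<0\}$, forcing $\ell(w)\geq|\mathcal{S}_{\lambda}|$); with your potential-function argument you assert reducedness ``by the very construction'' without justification, though the same $\mathcal{S}_{\lambda}$ count fills this in and the point is glossed over in the paper as well. You also supply the necessity direction by induction on $\ell(w)$, which the paper omits entirely since it is not needed for the application; that argument is clean and correct.
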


In fact, in \cite{GS} a much more general result than Proposition \ref{GS} is proved, but we will only need this special case. The proof of the proposition in this case is fairly elementary (but, for more details, see \cite{GS}). Here we include the proof of the only part of the proposition that we use: that (\ref{condition}) is a sufficient condition for the existence of $w$ as in the proposition. Let $\lambda \in P(R)$ and consider the set $$\mathcal{S}_{\lambda}:=\{ (\alpha, \langle \lambda, \alpha^{\vee}\rangle): \alpha \in R^+, \langle \lambda, \alpha^{\vee}\rangle < 0 \}.$$ Note that if $\langle \lambda , \alpha_i^{\vee} \rangle = -1$, then we have a natural bijection
\begin{equation*}
\mathcal{S}_{\lambda} \setminus \{ (\alpha_i, \langle \lambda, \alpha_i^{\vee} \rangle)\} \longrightarrow \mathcal{S}_{s_i(\lambda)}
\end{equation*}
given by
\begin{equation*}
(\alpha, \langle \lambda, \alpha^{\vee} \rangle ) \longmapsto (s_i(\alpha), \langle \lambda, \alpha^{\vee} \rangle) = (s_i(\alpha), \langle s_i(\lambda), s_i(\alpha^{\vee}) \rangle)  .
\end{equation*}
So, when applying a $\lambda$-minuscule element $w \in W$ to $\lambda$, we get that the size of the set $\mathcal{S}_{\lambda}$ decreases (by an element, for each simple reflection on the reduced expression for $w$). Clearly, this set is finite, therefore we see that there exists an element $w\in W$ such that $w$ is $\lambda$-minuscule and $w(\lambda)$ is dominant.

Using Proposition \ref{GS}, Proposition \ref{prop} now follows from the following result.

\begin{proposition}\label{winning}
For the element $z'$ we have that $\langle z', \alpha^{\vee} \rangle \geq -1$, for all positive coroots $\alpha^{\vee}$ of $R$.
\end{proposition}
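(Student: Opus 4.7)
I would prove Proposition~\ref{winning} by splitting the set of positive coroots $\alpha^\vee$ according to whether $\alpha \in R_J^+$ or $\alpha \in R^+ \setminus R_J$, and handling each case separately. For $\alpha \in R_J^+$, I would rely on a separate lemma (to be formulated as Lemma~\ref{jminlem} in the next section) asserting that $z'$ and $z$ lie in the same $W_J$-orbit in the simply-laced setting. Since $z$ is $J$-minuscule by construction and this property is preserved by $W_J$, the element $z'$ is $J$-minuscule as well; hence $\langle z', \alpha^\vee \rangle \in \{-1, 0, 1\}$ for every $\alpha \in R_J$, which is stronger than what is needed.

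For $\alpha \in R^+ \setminus R_J$, I would start from the defining expression $z' = y + \sum_{j \in J} k'_j \alpha_j$ and expand
\begin{equation*}
\langle z', \alpha^\vee \rangle = \langle y, \alpha^\vee \rangle + \sum_{j \in J} k'_j \, \langle \alpha_j, \alpha^\vee \rangle.
\end{equation*}
The first summand is non-negative by dominance of $y$. Moreover, since $\alpha \notin R_J$ forces $\alpha \neq \alpha_j$ for every $j \in J$, the simply-laced hypothesis gives $\langle \alpha_j, \alpha^\vee \rangle \in \{-1, 0, 1\}$, and combined with $k'_j \in [0, 1)$ the correction term is strictly greater than $-\bigl|\{\, j \in J : \langle \alpha_j, \alpha^\vee \rangle = -1 \,\}\bigr|$. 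When $\alpha$ has at most one simple-root neighbor in $J$ contributing $-1$, this already yields the desired bound.

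The main obstacle is sharpening this estimate to $\geq -1$ when several $j \in J$ contribute $-1$ to the pairing with $\alpha^\vee$. To close the gap I expect to exploit the structural constraints not yet used: that $y$ is orthogonal to $Q(R_J) \otimes \mathbb{R}$ (so $\langle y, \alpha_j^\vee \rangle = 0$ for $j \in J$), that $y$ lies in $pr_J(\Conv(W\mu))$, and that the coefficients $k_j$ arise from the integral system $\langle z, \alpha_i^\vee \rangle \in \{0, 1\}$ for $i \in J$, rather than being arbitrary non-negative reals. The cleanest route is probably a contradiction argument: assuming $\langle z', \alpha^\vee \rangle \leq -2$ for some positive coroot and deducing that $y$ must fall outside $pr_J(\Conv(W\mu))$, against the hypothesis on $y$. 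Pinning down this convex-geometric interaction with the root-theoretic data is where the genuine technical work will lie, and I would expect the Dynkin-diagram structure (in particular the at-most-trivalent nature of simply-laced diagrams) to keep the case analysis bounded.
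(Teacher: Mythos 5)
Your split into $\alpha \in R_J^+$ and $\alpha \in R^+ \setminus R_J$ is the right first move, and your treatment of the first case is sound and matches the paper's: Lemma~\ref{jminlem} (applied inside $R_J$ to $z-y$) shows $z'$ lies in the $W_J$-orbit of $z$, hence is $J$-minuscule, so $\langle z', \alpha^{\vee} \rangle \in \{-1,0,1\}$ for $\alpha \in R_J$. For the second case the expansion $\langle z', \alpha^{\vee} \rangle = \langle y, \alpha^{\vee} \rangle + \sum_{j} k'_j \langle \alpha_j, \alpha^{\vee} \rangle$ is also the right start. A small slip: with $k'_j \in [0,1)$, the naive bound plus integrality of $\langle z', \alpha^{\vee} \rangle$ already handles up to \emph{two} indices $j$ with $\langle \alpha_j, \alpha^{\vee} \rangle = -1$, not just one. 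But the gap you flag is genuine as soon as three such $j$ occur, which does happen: take $R = D_4$, $J = \{1,3,4\}$ the three outer nodes, and $\alpha = \alpha_2$ the central node; then $\langle \alpha_j, \alpha^{\vee} \rangle = -1$ for all three $j$, so the naive bound only gives $\langle z', \alpha^{\vee} \rangle \geq -2$.

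Where your plan goes off track is in what you reach for to close that gap. You propose contradicting $y \in pr_J(\Conv(W\mu))$, but that hypothesis plays no role here — the paper's key technical input, Proposition~\ref{genkrprop}, makes no reference to $\mu$ or to $\Conv(W\mu)$ at all. What is actually used is precisely the $J$-minusculity of $z'$ (the constraint $\langle z'-y, \alpha_i^{\vee} \rangle \in \{-1,0,1\}$ for $i \in J$, which you mention but do not exploit), together with dominance of $y$. Setting $u = z'-y \in Q(R_J)\otimes\R$, the paper shows that the minimum of $u \mapsto \langle u, \beta^{\vee}\rangle$ over the ``minuscule box'' $\mathcal{M}_J = \{v : \langle v, \alpha^{\vee}\rangle \in [-1,1],\ \forall \alpha \in R_J\}$ is attained at the orthogonal projection $(-\beta)_J$ of $-\beta$ onto $Q(R_J)\otimes\R$, and that $\langle (-\beta)_J, \beta^{\vee}\rangle > -2$ because $\beta \notin Q(R_J)\otimes\R$; in the $D_4$ example this minimum is exactly $-3/2$, comfortably above $-2$, after which integrality finishes the job. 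This optimization argument — reduce $\beta$ to $J$-antidominant position, show each connected component of $J$ has at most one $j_p$ pairing to $-1$, and verify that moving from $(-\beta)_J$ in any direction that decreases $\langle \cdot, \beta^{\vee}\rangle$ immediately leaves $\mathcal{M}_J$ — is the missing piece in your proposal, and it is cleaner than the convex-geometric contradiction you were envisioning because it isolates the statement from $\mu$ entirely.
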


The proof of this Proposition is carried out in the next section. In the end, let us mention that one can also phrase propositions \ref{GS} and \ref{winning}, as well as the results of the next section, using a modified version of the numbers game of Mozes (cf. \cite{mozes}), where we impose a lower bound condition (see \cite{GS} for more details).

\section{Proof of Proposition \ref{winning}}

The results in this section are joint with Travis Schedler (stemming from \cite{GS}). We are working under the same assumptions as in the last section. In particular, $R$ is a root system of type ADE. First, we prove that $z$ can be obtained from $z'$ by
applying a $z'$-minuscule Weyl group element to $z'$. More generally, we have:

\begin{lemma}\label{jminlem} Suppose $u \in P(R)$ is minuscule, and
\begin{equation*}
u = \sum_{i \in I} \ell_i \alpha_i,
\end{equation*}
with $\ell_i \geq 0$ for all $i$. Then there exists an element $w\in W$ such that $w$ is $u'$-minuscule and $w(u')=u$, where $u'$ is the \emph{fractional part} of $u$, given by
\begin{equation*}
u' =  \sum_{i \in I} \ell_i' \alpha_i, \quad \ell_i' = \ell_i - \lfloor \ell_i \rfloor.
\end{equation*}
\end{lemma}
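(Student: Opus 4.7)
The natural strategy is to induct on $N := \sum_{i\in I}\lfloor\ell_i\rfloor$. If $N = 0$, then $u = u'$ and $w = e$ works trivially; so assume $N \geq 1$. Everything reduces to the \textbf{Key Claim} that there exists $i \in I$ with $\ell_i \geq 1$ and $\langle u,\alpha_i^\vee\rangle = 1$. Granted this, set $\tilde u := u - \alpha_i$. Since $\langle u,\alpha_i^\vee\rangle = 1$, we have $\tilde u = s_i u$, so $\tilde u$ is Weyl-conjugate to $u$, hence itself minuscule. Its simple-root coefficients are non-negative (the $i$-th drops from $\ell_i \geq 1$ to $\ell_i - 1 \geq 0$, the rest are unchanged), and its fractional part is still $u'$. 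Since $\sum_j \lfloor\tilde\ell_j\rfloor = N - 1$, the inductive hypothesis yields a $u'$-minuscule $\tilde w$ with $\tilde w(u') = \tilde u$. Putting $w := s_i\tilde w$, one verifies $w(u') = s_i(\tilde u) = \tilde u + \alpha_i = u$, using $\langle\tilde u,\alpha_i^\vee\rangle = \langle u,\alpha_i^\vee\rangle - 2 = -1$. Moreover, if $\tilde w = s_{j_1}\cdots s_{j_{t-1}}$ is a reduced expression witnessing the $u'$-minuscule condition for $\tilde w$, then $s_i s_{j_1}\cdots s_{j_{t-1}}$ satisfies the $u'$-minuscule conditions at every step (the only new one being $\langle\tilde w(u'),\alpha_i^\vee\rangle = \langle\tilde u,\alpha_i^\vee\rangle = -1$), and it is reduced by the standard theory of $\lambda$-minuscule elements (cf.\ \cite{stem}).

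The whole lemma thus reduces to the Key Claim, which I expect to be the main obstacle. To prove it, I set $\ell_{\max} := \max_i\ell_i \geq 1$ and $M := \{i : \ell_i = \ell_{\max}\} \subseteq S := \{i : \ell_i\geq 1\}$, and choose a leaf $k$ of the subgraph of the Dynkin diagram induced on $M$. A short case analysis, using $\langle u,\alpha_k^\vee\rangle \in \{-1,0,1\}$, the bound $\ell_j \leq \ell_{\max}$ at neighbours (strict for $j\notin M$), and $\ell_{\max} \geq 1$, shows $\langle u,\alpha_k^\vee\rangle = 1$ whenever $k$ has Dynkin-degree at most $2$; this already disposes of type $A$ entirely. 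The remaining possibility is that every $M$-leaf is a trivalent branch node, which can only happen in types $D$ and $E$ and is the delicate point. Here one exploits the finer integrality constraints that minusculity imposes on the coefficients at the three neighbours of the branch: these either force $\langle u,\alpha_k^\vee\rangle = 1$ at the branch itself, or produce a non-branch node in $S$---typically a Dynkin-leaf of the full Dynkin diagram that lies in $S$---at which the pairing with $u$ equals $1$.
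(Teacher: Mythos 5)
Your overall strategy matches the paper's exactly: induct on $N := \sum_i \lfloor \ell_i \rfloor$, reduce to the Key Claim that when $u \neq u'$ there is an index $i$ with $\ell_i \geq 1$ and $\langle u, \alpha_i^\vee \rangle = 1$, apply $s_i$, and conclude. The reduction step, and the disposal of the cases where the chosen maximal-drop vertex $k$ has Dynkin valence at most $2$, are correct and essentially the same as what the paper does (the paper picks $j_0$ with $\ell_{j_0}$ maximal and a strict drop to some neighbour, and shows it cannot have valence $\leq 2$). But the trivalent branch-node case, which you yourself identify as the delicate point and which in types $D$ and $E$ is the heart of the matter, is not actually proved. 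Your sentence that one ``exploits the finer integrality constraints'' so that one ``either'' gets pairing $1$ at the branch ``or'' finds ``typically'' a leaf of $S$ with pairing $1$ names the desired conclusion rather than deriving it.

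In the paper this case carries nearly all the weight of the proof. One introduces the maximal connected subgraph $\Gamma'$ through the branch vertex $j_0$ on which every amplitude $\langle u,\alpha_i^\vee\rangle$ lies in $\{0,-1\}$; observes that each vertex $j$ adjacent to $\Gamma'$ from outside has amplitude $1$; argues via minusculity that $u$ decomposes as $v+v'$ with $v$ and $v'$ supported on distinct branches of the star; writes the coefficient functions of $v$, $v'$ as affine-linear along the branch intervals; applies $v$- and $v'$-minuscule Weyl group elements to reduce $v$ and $v'$ to single simple roots $\alpha_c$ and $-\alpha_d$; and then performs a case analysis on whether $c$ is an endpoint of its branch, using the classification of extending vertices (and the fact that $\widetilde{D_4}$ is the extended diagram containing three-valent nodes in the relevant way) to derive a contradiction of the form $\ell_{j_1}\geq 1$ or $\ell_{j_0}\leq 0$. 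None of this appears, even in outline, in your proposal. So the high-level plan is right and the easy cases are handled correctly, but the hard half of the Key Claim is a genuine gap: as written, the argument does not go through for the trivalent case.
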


In particular, the lemma shows that $z'$ is $J$-minuscule. More generally, it implies that the Weyl orbit of every minuscule configuration
contains its fractional part (which is therefore minuscule). We give a non-case-by-case proof in the next subsection (the only
classification results used are the fact that all (simply-laced) Dynkin diagrams are star-shaped graphs, and that $\widetilde{D_4}$ is extended Dynkin). But, first, we continue with the proof of Proposition \ref{winning}, which will be deduced from the following result.

\begin{proposition} \label{genkrprop}
Let $u \in Q(R_J) \otimes_\Z \R$ be an element such that $\langle u, \alpha^{\vee} \rangle \in [-1,1]$ for all $\alpha \in R_J$. Then, $\langle u, \beta^{\vee} \rangle \in (-2, 2)$ for all $\beta \in R$.
\end{proposition}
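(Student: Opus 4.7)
Since $R$ is simply-laced I normalize the $W$-invariant form so that $(\alpha,\alpha)=2$ for every root, identifying $\alpha^{\vee}$ with $\alpha$; then $\langle u,\alpha^{\vee}\rangle=(u,\alpha)$ throughout. The case $\beta\in R_J$ is immediate from the hypothesis: $(u,\beta)\in[-1,1]\subset(-2,2)$.

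For $\beta\in R\setminus R_J$, let $V_J=Q(R_J)\otimes_{\mathbb Z}\mathbb R$ and decompose $\beta=\beta_J+\beta^{\perp}$ orthogonally in $V=P(R)\otimes_{\mathbb Z}\mathbb R$, with $\beta_J\in V_J$ and $\beta^{\perp}\in V_J^{\perp}$. Because $R_J$ is the standard parabolic sub-root system attached to $J$, one has $R\cap V_J=R_J$; thus $\beta\notin V_J$, $\beta^{\perp}\neq 0$, and $\|\beta_J\|^{2}=2-\|\beta^{\perp}\|^{2}<2$. Since $u\in V_J$, the pairing reduces to $(u,\beta)=(u,\beta_J)$, so the task is to show $|(u,\beta_J)|<2$. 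I would invoke linear-programming duality for the centrally-symmetric polytope $P_J:=\{v\in V_J:|(v,\alpha)|\le 1\ \forall\alpha\in R_J\}$: its support function at $\beta_J$ equals
\[
h_{P_J}(\beta_J)\;=\;\min\Bigl\{\sum_{\alpha\in R_J^{+}}|c_\alpha|\,:\,\beta_J=\sum_{\alpha\in R_J^{+}}c_\alpha\alpha\Bigr\}.
\]
Thus it suffices to exhibit one such decomposition whose total $\ell^{1}$-coefficient norm is strictly less than $2$.

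I would construct this decomposition by induction on $|I|$. If $I\setminus J\not\subseteq\mathrm{supp}(\beta)$, set $K=J\cup\mathrm{supp}(\beta)\subsetneq I$; then $R_K$ is a simply-laced sub-root system of $R$ of strictly smaller rank that contains both $R_J$ and $\beta$, and the inductive hypothesis applied to $(R_K,R_J,u,\beta)$ yields the bound. The base case is $I\setminus J\subseteq\mathrm{supp}(\beta)$, in which $\beta$ involves every simple root outside $J$; here I would compute $\beta_J$ explicitly via the inverse Cartan matrix of $R_J$ and verify the $\ell^{1}$-bound by a short case check on the ADE types. The main obstacle I expect is this base case: translating the strict inequality $\|\beta_J\|^{2}<2$ into the $\ell^{1}$-coefficient bound $\sum|c_\alpha|<2$ uniformly. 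A case-free uniform argument may be available by exploiting the $W_J$-symmetry (when $R_J$ is irreducible, $\sum_{w\in W_J}w\beta_J=0$ forces $\beta_J$ into the $W_J$-average-zero part of $V_J$) together with the star-shape of simply-laced Dynkin diagrams; otherwise a finite type-by-type inspection across $A$, $D$, $E_6$, $E_7$, $E_8$ should suffice, with the strictness in each type traced back to $\|\beta_J\|^{2}<2$.
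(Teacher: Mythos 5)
Your setup is sound: reducing by symmetry to $\beta\notin R_J$, replacing $(u,\beta)$ by $(u,\beta_J)$, and identifying $\max_{u\in\mathcal{M}_J}(u,\beta_J)$ with the $\ell^1$-gauge $\min\{\sum_{\alpha\in R_J^+}|c_\alpha|:\beta_J=\sum c_\alpha\alpha\}$ via LP duality is correct, as is the observation that $\|\beta_J\|^2<2$. The inductive reduction to $I\setminus J\subseteq\operatorname{supp}(\beta)$ is also fine. But there is a genuine gap exactly where you flag it: you never actually derive the $\ell^1$-coefficient bound $<2$, and the inference ``$\|\beta_J\|^2<2$ hence gauge $<2$'' is not automatic — the gauge of $\operatorname{conv}(R_J)$ can exceed the Euclidean norm for a general point of $V_J$, so the strictness in $\|\beta_J\|^2<2$ does not transfer for free, and the sketched $W_J$-averaging observation (that $\beta_J$ has no $W_J$-invariant component) gives no control on the coefficients. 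The paper closes this gap by a different, case-free device: it shows that the linear functional $u\mapsto\langle u,\beta^{\vee}\rangle$ on the polytope $\mathcal{M}_J$ attains its minimum at $u=(-\beta)_J$ itself (after a preliminary reduction making $\beta$ $J$-antidominant and deleting unused components of $J$), by checking that any perturbation $(-\beta)_J+t\gamma$ with $\langle\gamma,\beta^\vee\rangle<0$ immediately leaves $\mathcal{M}_J$ because pairing with a suitable highest coroot $\alpha^{\vee}$ of a subinterval $J_p'$ exceeds $1$. Once the extremizer is known to be $(-\beta)_J$, the Euclidean bound \emph{does} suffice, since $\langle(-\beta)_J,\beta^\vee\rangle=-\|\beta_J\|^2>-2$; in your language, the paper in effect proves that the gauge equals $\|\beta_J\|^2$ for such $\beta_J$, which is precisely the identity you would need. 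So your framing is equivalent, but the hard content — identifying the extremal vertex or equivalently proving the coefficient bound — is missing, and the fallback ``short case check on ADE types'' is not a proof.
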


To see that this result implies Proposition \ref{winning}, put $u = z'-y$ and note that $u$ satisfies the conditions of the proposition above. (Note, for example, that because $z'$ is $J$-minuscule and $y$ is orthogonal to all the (co)roots of $R_J$, then $\langle u, \alpha^{\vee} \rangle \in [-1,1]$.) Then $$\langle z' , \beta^{\vee} \rangle = \langle u , \beta^{\vee} \rangle + \langle y , \beta^{\vee} \rangle  \geq -1, \forall \beta \in R^+,$$ as desired. (In the last inequality we also used the fact that $y$ is dominant.)

\emph{Proof of Proposition \ref{genkrprop}.} It suffices to show that, if $\beta \in R^+ \setminus R_J$, then $\langle u, \beta^{\vee} \rangle > -2$ for all $u$ as above, where $R^+$ stands for the positive roots in $R$. Fix an element $\beta \in R^+ \setminus R_J$. We claim that the minimum value of $\langle u, \beta^{\vee} \rangle$ is obtained when $u=(-\beta)_J$, the projection of $-\beta$ to $Q(R_J) \otimes_\mathbb{Z} \mathbb{R}$ with respect to the Cartan form.  Then, since $-\beta \notin Q(R_J) \otimes_\Z \R$ (as $\beta \in R \setminus R_J$), it follows that $\langle (-\beta)_J,  \beta^{\vee} \rangle  >  \langle \beta, -\beta^{\vee} \rangle = -2$, as desired.

Let $\mathcal{M}_J = \{v \in Q(R_J) \otimes_\Z \R \mid \langle v, \alpha^{\vee} \rangle \in [-1,1], \forall \alpha \in R_J\}$. Denote by $J_1, \ldots, J_m$ the connected components of $J$. If there exists $j \in J$ such that $\langle \alpha_j, \beta^{\vee} \rangle > 0$, i.e., $\langle \alpha_j, \beta^{\vee} \rangle = 1$, then we may replace $\beta$ with $s_j \beta$ and apply the automorphism $s_j$ to $\mathcal{M}_J$, without changing the statement. Since we can always make an element anti-dominant (in a Dynkin diagram), we may therefore assume that $\langle \alpha_j, \beta^{\vee} \rangle \leq 0$, for all $j \in J$.

For each connected component $J_p$, there exists at most one $j_p \in J_p$ such that $\langle \alpha_{j_p}, \beta^{\vee} \rangle < 0$; moreover, for this $j_p$ we must have that $\langle \alpha_{j_p}, \beta^{\vee} \rangle = -1$. The second statement is clear. For the first one, assume that $\langle \alpha_{j_p}, \beta^{\vee} \rangle < 0$ and $\langle \alpha_{j'_p}, \beta^{\vee} \rangle < 0$, for some $j_p$ and $j'_p \in J_p$, and consider the root that is the sum of the simple roots $\alpha_i$, where $i$ ranges through the set of vertices that, in the Dynkin diagram, form a line segment that starts at $j_p$ and ends at $j'_p$, and that is entirely contained in $J_p$. If $j_p \neq j'_p$, then pairing $\beta^{\vee}$ with this root gives at most $-2$, a contradiction with the fact that we are working with a simply-laced root system.

We may assume that there exists such a $j_p$ in each connected component, since otherwise we could delete the whole connected component from $J$ without changing the statement.

By assumption, $(-\beta)_J \in \mathcal{M}_J$. Therefore, $(-\beta)_J$ attains the minimum value of the function $ \mathcal{M}_J \rightarrow \mathbb{R}$ $( u \mapsto \langle u, \beta^{\vee} \rangle)$ if and only if, for every element $\gamma = \sum_{j \in J} c_j  \alpha_j \in Q(R_J) \otimes_\Z \R$ such that $\langle \gamma , \beta^{\vee} \rangle < 0$, we have that $(-\beta)_J + t \gamma \notin \mathcal{M}_J$ for $t > 0$.

Suppose that $\gamma$ as above satisfies $\langle \gamma , \beta^{\vee} \rangle  < 0$. Then we have $\sum_{p=1}^m c_{j_p} > 0$. Now, pick $p$ such that $c_{j_p} > 0$.  Let $J_p' \subset J_p$ be the maximal connected subset such that $j_p \in J_p'$ and $c_j > 0$ for all $j \in J_p'$. Let $\alpha^{\vee} \in (R_{J_p'})_+$ be the maximal coroot of $J_p'$.  Then, we have $\langle \beta, \alpha^{\vee} \rangle = -1$, so $\langle (-\beta)_J , \alpha^{\vee} \rangle= 1$. Also, we have
\begin{equation*}
\langle \alpha_j, \alpha^\vee \rangle \geq 0, \text{ $ \forall j \in J_p'$ (with strict inequality for at least one $j$)},
\end{equation*}
\begin{equation*}
\langle \alpha_j, \alpha^\vee \rangle \leq 0, \text{ $\forall j \notin J_p'$},
\end{equation*}
\begin{equation*}
c_j > 0, \forall j \in J_p', \, \text{ and}\,\, c_j \leq 0, \forall j \notin J_p' \text{ adjacent to $J_p$}.
\end{equation*}
Thus, we deduce that
\begin{equation*}
\langle \gamma, \alpha^{\vee} \rangle = \sum_{j \in J_p'} c_j \langle \alpha_j, \alpha^\vee \rangle + \sum_{\underset{j \text{ adjacent to } J_p'}{j \in J_p \setminus J_p'}} c_j \langle \alpha_j, \alpha^\vee \rangle > 0.
\end{equation*}
We therefore get $\langle (-\beta)_J + t \gamma, \alpha^{\vee} \rangle = 1 + t (\langle \gamma, \alpha^{\vee} \rangle) > 1$ for all $t > 0$, and hence $(-\beta)_J + t \gamma \notin \mathcal{M}_J$ for any $t > 0$.  Thus, $(-\beta)_J$ indeed attains the minimum value of the function $u \mapsto \langle u, \beta^{\vee} \rangle$ on $\mathcal{M}_J$.
$\square$

Next we prove Lemma \ref{jminlem}.

\emph{Proof of Lemma \ref{jminlem}.} Denote $u_i:=\langle u, \omega_i \rangle$, where recall that $\omega_i$ are the fundamental coweights. Inductively on $\sum_{i \in I} \lfloor \ell_i\rfloor$, it suffices to prove that, if $u \neq u'$, then there exists $i \in I$ such that both $u_i = 1$ and $\ell_i \geq 1$: in this case, we can apply the simple reflection $s_i$ to $u$ and apply the induction hypothesis (for the modified $u$, and the same $u'$).  For a contradiction, suppose that $u \neq u'$, and there does not exist such an $i$.

We claim that not all $\ell_i$ are equal.  If they were equal, then either $\#I = 1$, in which case $u = u'$, a contradiction, or else $\ell_i = 1$ for all $i$ (in order to ensure that $u_i \in \{-1,0,1\}$ at a vertex $i$ of valence $1$). The latter contradicts minusculity, since $\langle u , \delta'^\vee \rangle \geq 2$ where $\delta'^\vee$ is the maximal positive coroot of $R$.

Next, let $j_0 \in I$ be such that $\ell_{j_0}$ is maximal, and such that $\ell_{j_0} > \ell_{i}$ for some $i$ adjacent to $j_0$.  By assumption, $\ell_{j_0} \geq 1$, so we must have $u_{j_0} \in \{0,-1\}$.  If $j_0$ has valence $\leq 2$, then $u_{j_0} = 2\ell_{j_0} - \sum_{i \text{ is adjacent to $j_0$}} \ell_i > 0$, a contradiction.  Hence, the valence of $j_0$ is $3$, and $j_0$ is the node of $\Gamma_R$ (which is a Dynkin diagram of type $D$ or $E$). We will think of $\Gamma_R$ as a star with three branches, each of which contains the node $j_0$.

Let $\Gamma' \subset \Gamma_R$, on the vertex set $I' \subset I$, be the maximal subgraph containing $j$ such that $u_i \in \{0,-1\}$ for all $i \in I'$. (Note that, since $u$ is minuscule, we have that at most one $u_i, i \in I'$ is non-zero.) The restriction $u|_{\Gamma'}$ is antidominant and minuscule on $\Gamma'$. Let $$J := \{i \in I \setminus I' \mid \text{$i$ is adjacent to $\Gamma'$}\}.$$  We must have $u_j = 1$ for all $j \in J$. We claim that $u$ must have the form
\begin{equation*}
u = v + v',
\end{equation*}
where $v$ is supported on one component of $\Gamma_R \setminus \Gamma'$, entirely within one branch of $\Gamma_R$, and $v'$ is supported entirely on a different branch of $\Gamma_R$ (which may or may not intersect $\Gamma'$). If this were not possible, then either $\Gamma_R \setminus \Gamma'$ consists of three components, or else consists of two components and $u$ has an amplitude of $-1$ on a different branch of $\Gamma_R$ from the branches containing $\Gamma_R \setminus \Gamma'$.  The first possibility would contradict minusculity of $u$ because if we let $\alpha^{\vee}$ be the coroot that is the sum of the simple coroots $\alpha_i^\vee,$ where $i$ ranges through the elements of $I'$ and those $i$ that are adjacent to $\Gamma'$, then we would get $\langle u, \alpha^\vee  \rangle \geq 2$. The second possibility would also contradict minusculity of $u$, restricted to the line subsegment of $\Gamma_R$ with endpoints the two vertices of $I \setminus I'$ adjacent to $\Gamma'$ (on this segment, all the $u_i$'s are zero, except at the endpoints, where they are both $1$).

Now, let $j_1 \in J$ be the vertex which lies in the support of $v$.  We will derive a contradiction in the form of $\ell_{j_1} \geq 1$ or $\ell_{j_0} \leq 0$.  Consider the branch of $\Gamma_R$ containing the support of $v$, call it $\Gamma_1 \subset \Gamma_R$, and label its vertices $1, 2, \ldots, n$, in order from the endpoint of the branch to the node $n$ (labeling the vertex $j_0$).  Let $m \in \{1,2,\ldots, n\}$ be the new label of the vertex $j_1 \in J$.  Finally, for any two integers $a \leq b$, let $[a,b] := \{a,a+1,\ldots,b\}$ be the interval of integers between $a$ and $b$, inclusive.

Let us write $v = \sum_{i \in I} f(i) \alpha_i$ and $v' = \sum_{i \in I} g(i) \alpha_i$.  Restricted to the interval $[m,n] \subset \Gamma_1$, both $v$ and $v'$ have at most one nonzero amplitude, which must be on an endpoint of $[m,n]$.  Hence, $f|_{[m,n]}$ and $g|_{[m,n]}$ are linear functions (possibly with constant term).  We can determine exactly what these functions are.  First, $g$ is actually linear restricted to all of $\Gamma_1$, and must be of the form $g(x) = a x$ for some $a \leq 0$, since $\langle v' , \omega_1 \rangle = 0 = 2g(1)-g(2) = g(1) - a$. Next, note that we can find a $v$-minuscule Weyl group element $w$ such that $w(v) = \alpha_c$, for some $c \leq m$, and where the simple reflections appearing in $w$ come from $\Gamma_1 \setminus (\Gamma' \cup \{j_1\}) $.

Suppose that $c = 1$, i.e., $\alpha_c$ is supported on the endpoint of the branch of $\Gamma_R$ containing the support of $v$. In this case, we will derive the contradiction $\ell_{j_1} \geq 1$. Write $\alpha_1 = \sum_{i \in I} q_i \alpha_i$.
Let $h(x)$ be the linear function on $\Z$ such that $h(i) = q_i$ for all $1 \leq i \leq n$. In particular, $h|_{[m,n]} = f|_{[m,n]}$. We have $\ell_{j_1} = f(m) + g(m) = h(m) + g(m)$. We claim that $h(m) + g(m) \geq 1$, which will give the desired contradiction. To see this, first note that $h(x) = bx + 1$, since $2 q_1 - q_2 = 1$.  Next, $\ell_n = h(n) + g(n) = (a+b)n + 1 \geq 1$, so that $a+b \geq 0$.  But then, $h(m) + g(m) = (a+b)m + 1 \geq 1$ as well, giving the desired contradiction.

Finally, suppose that $c \neq 1$. We will derive the contradiction $\ell_{j_0} \leq 0$.  By construction, $\alpha_c + v'$ is in the same Weyl orbit as $u$, and is hence minuscule. Since $c \neq 1$, $c$ cannot be at an extending vertex of $\widetilde{\Gamma_R}$, so $\alpha_c$ is not minuscule. Hence, $v' \neq 0$, and the nonzero amplitude of $v'$ closest to the node must be $-1$.  By applying a $v'$-minuscule element of $W$ to $v$ that comes from the simple reflections on the branch of $\Gamma_R$ containing the support of $v'$, we must obtain a vector $-\alpha_d$, where $d$ is on the same branch of $\Gamma_R$ containing the support of $v'$ (unlike $c$, to the symbol $d$ we do not assign an integer, since $d$ is not necessarily a vertex of $\Gamma_1$). By the same argument as before, we must have that $\alpha_c - \alpha_d$ is minuscule, and moreover, if we write $\alpha_c = \sum_{i \in I} f'(i) \alpha_i$ and $-\alpha_d = \sum_{i \in I} g'(i)\alpha_i$, then $f'|_{[m,n]} = f|_{[m,n]}$ and $g'|_{[m,n]} = g|_{[m,n]}$.  In particular, $f'(n) + g'(n) = \ell_n = \ell_{j_0} \geq 1$.

Since $\alpha_c - \alpha_d$ is minuscule, the restriction of $-\alpha_d$ to the component of $\Gamma_R \setminus \{c\}$ containing $d$, call it $\Gamma_{(d)}$, is minuscule, as is the restriction of $\alpha_c$ to the component of $\Gamma_R \setminus \{d\}$ containing $c$, call it $\Gamma_{(c)}$.  That is, $c$ is an extending vertex of $\Gamma_{(c)}$, and $d$ is an extending vertex of $\Gamma_{(d)}$. Since $c$ is not an endpoint of $\Gamma_{(c)}$ unless $d$ is the node, this can only happen if $d$ is either the node or $\Gamma_{(c)}$ is of type $A$ (a line segment), i.e., $d$ is adjacent to the node.  Then, for $d$ to be an extending vertex of $\widetilde{\Gamma_{(d)}}$, we must have that $\Gamma_{(d)}$ is of type $D_3$ if $d$ is adjacent to the node (since in this case, $\widetilde{\Gamma_{(d)}} \supseteq \widetilde{D_4}$ and hence $\widetilde{\Gamma_{(d)}} = \widetilde{D_4}$), and of type $A$ if $d$ is the node, i.e., $c$ must be adjacent to the node.  In the latter case, $f'(n) + g'(n) < 0$, a contradiction, so we must be in the former case, i.e., $\Gamma_{(d)} \cong D_3$.  In this case, $\Gamma_R$ itself is of type $D_{\geq 4}$, and $c$ is on the long branch.  In this case, it is easy to see that $f'(n) + g'(n) \leq 0$, again a contradiction.
$\square$

\section{The Non-Simply Laced Cases}\label{nonsimplylaced}

In this section we will prove Theorem \ref{thm} for non-simply laced groups. We will use a folding argument to deduce the non-simply laced cases from the simply-laced ones. We thank Robert Kottwitz for generously sharing with us his ideas on proofs of the results in this section.

We retain the same notation as in the Introduction. In particular, $G$ is a split connected reductive group, $B$ is a Borel subgroup, and $T$ is a maximal torus in $B$. We will, furthermore, suppose that $G$ is adjoint and simply-laced. Fix a set of root vectors $\{ X_{\alpha}\}_{\alpha \in \Delta}$ of $T$, where $\Delta$ is the set of simple roots of $G$, with respect to the chosen Borel group $B$.

Let $\theta$ be an automorphism of $G$ that fixes $B$, $T$, and $\{ X_{\alpha}\}_{\alpha \in \Delta}$, and such that the following holds:
\begin{itemize}
\item[($\dagger$)] For every root $\alpha$ from $\Delta$, we have that $\alpha$ is orthogonal to every root $\beta \neq \alpha$ that is in the orbit of $\alpha$ under the group generated by $\theta$, i.e., $( \alpha , \beta ) = 0$, for all $\beta \neq \alpha$ of the form $\beta = \theta ^k (\alpha)$, for some $k \in \mathbb{N}$,
\end{itemize}
where the parentheses $(\, ,)$ stand for the obvious bilinear pairing in $X^*(T) \otimes_{\mathbb{Z}} \mathbb{R}$.

Since $\theta$ acts on $T$, it also acts on the group of characters $X^*(T)$. Denote by $T^{\theta}$ the group of fixed points of $T$ under $\theta$. Then we have that $$X^*(T^{\theta})= X^*(T)_{\theta},$$ where $X^*(T)_{\theta}$ denotes the group of co-invariants of $X^*(T)$ under $\theta$. (In general, for an object on which the map $\theta$ acts, let us agree to use the superscript and subscript $\theta$ for the invariants and co-invariants, respectively, of this object under the action of $\theta$.)

It is clear that $\theta$ acts on $\Delta$. For each orbit of $\theta$ in $\Delta$ we pick a representative, giving us a set which we denote by $\mathcal{R}$ and which we assume is fixed for the rest of this section. The images in $X^*(T)_{\theta}$ of the elements of $\mathcal{R}$ give a basis for $X^*(T)_{\theta}$, and the latter is torsion-free. This means that $X^*(T^{\theta})$ is torsion-free and hence $T^{\theta}$ is connected, which implies that $H:= G^{\theta}$ is also connected. Moreover, $H$ is adjoint since $G$ was assumed to be so. One gets all split adjoint $H$ (up to isomorphism) in this way. We remind the reader (cf. \cite{bourbaki2}, Exercise VII, \S 5, 13, pp. 228--229) that if the Dynkin diagram (or more generally an irreducible component thereof) corresponding to $G$ is of type $A_{2n+1} (n \geq 1)$, $D_{n} (n \geq 4)$, $E_6$, or $D_4$, then the Dynkin diagram (or the respective irreducible component thereof) corresponding to $H$ is of type $B_{n}$, $C_{n-1}$, $F_4$, or $G_2$, respectively, where $\theta$ is of order two in each of the first three cases, apart from the last case where it is of order three.

Recall that by $X$ we have denoted the group of cocharacters $X_*(T)$. We write $Y$ for the group $X^{\theta}$ and note that in fact $Y=X_*(T^{\theta})$. We now consider $$H \supset B^{\theta} \supset T^{\theta},$$ and the Weyl group $W_H$ corresponding to $H$. Since $\Cent_G(A^{\theta})=A$, we get $N_G(A^{\theta}) \subset N_G(A)$, and thus $W_H \leq W$.

In $H$, any Levi component $M_H \supset T^{\theta}$ of a parabolic subgroup containing $B^{\theta}$ arises as the fixed-points group $M^{\theta}$ for some $\theta$-stable Levi component $M \supset T$ of a parabolic subgroup (of $G$) containing $B$. We will write $M_H$ instead of $M^{\theta}$, and remark that it is connected, since $A^{\theta}$ is connected.

We need some more notation. We write $Y_H$ and $Y_{M_H}$ for the quotient of $Y$ by the coroot lattice for $H$ and $M_H$, respectively. The maps $\psi_{H}: Y \rightarrow Y_H $ and $\psi_{M_H}: Y \rightarrow Y_{M_H}$ are the natural projections. We write $\mathfrak{b} = Y \otimes_{\mathbb{Z}} \mathbb{R}$ and $\mathfrak{b}_{M_H} = Y_{M_H} \otimes_{\mathbb{Z}} \mathbb{R}$. The map $pr_{M_H}: \mathfrak{b} \rightarrow \mathfrak{b}_{M_H}$ is the natural projection induced by $\psi_{M_H}$. Finally, for any coweight $\mu \in Y$, $\Conv(W_H (\mu))$ stands for the convex hull in $\mathfrak{b}$ of all the weights in the orbit of $\mu$ under $W_H$.

Let $\mu \in Y$ be $H$-dominant. We define $$\mathcal{P}_{\mu, H} = \left\{ \nu \in Y: (i)\, \psi_H(\nu) = \psi_H(\mu); \, \emph{and} \,\,(ii)\, \nu \in \Conv(W_H(\nu)) \right\}.$$

The following result implies Theorem \ref{thm} for non-simply laced adjoint groups. But, if Theorem \ref{thm} holds for the adjoint group of $G$, then it holds for $G$ itself (see Fact 2, pg. 167, in \cite{cathy}). Therefore the result below implies Theorem \ref{thm} for all non-simply laced $G$, not just the adjoint ones.

\begin{proposition}
With notation as above, we have that $$\psi_{M_H}\left(\mathcal{P}_{\mu, H}\right)=\left\{\nu\in Y_{M_H} :(i)\,\nu, \mu \,\,\emph{have the same image in}\,\, Y_H;\right.$$
$$\left. \hspace{3cm} (ii)\, \emph{the image of}\, \,\nu\, \emph{in} \,\mathfrak{b}_{M_H} \,\emph{lies in}\, pr_{M_H}\left(\Conv \left( W_H(\mu) \right) \right)\right\}.$$
\label{nsl}
\end{proposition}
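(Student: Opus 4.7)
The plan is to reduce Proposition \ref{nsl} to the already-established simply-laced Theorem \ref{thm} by a standard folding argument. The key ingredients I would use are: the identification $Y = X^\theta$; the equality $W_H = W^\theta$, which is a consequence of condition $(\dagger)$; and the classical \emph{folding identity} $\Conv(W\mu) \cap \mathfrak{b} = \Conv(W_H \mu)$ valid for any $\mu \in Y$ dominant. Using these, I would establish the dictionary between the lattice data for $H$ and $G$: the natural maps $Y_H \hookrightarrow (X_G)^\theta$ and $Y_{M_H} \hookrightarrow (X_M)^\theta$ are injective, the $H$-simple coroots (viewed in $X$) are $\theta$-orbit sums of the $G$-simple coroots, and the projection $pr_{M_H} : \mathfrak{b} \to \mathfrak{b}_{M_H}$ is the restriction of $pr_M$ to $\theta$-invariants. (Here I need a lemma saying that $H$-dominant $\theta$-invariant elements are automatically $G$-dominant, which follows from $(\dagger)$ and the orbit-sum description of $H$-coroots.)

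The inclusion $\psi_{M_H}(\mathcal{P}_{\mu,H}) \subseteq \text{RHS}$ is immediate from the definitions. For the reverse inclusion, suppose $\nu \in Y_{M_H}$ satisfies (i) and (ii); we may assume $\nu$ is $H$-dominant. Lift $\nu$ to $\tilde\nu \in Y$ and consider its image $\bar\nu := \varphi_M(\tilde\nu) \in X_M$, which is $\theta$-invariant. Using the dictionary above I would check that $\bar\nu$ and $\mu$ have the same image in $X_G$ (from (i)) and that $pr_M(\bar\nu) \in pr_M(\Conv(W\mu))$ (from (ii) combined with the folding identity). Theorem \ref{thm} applied to $G$ then yields some $\nu_0 \in \mathcal{P}_\mu$ with $\varphi_M(\nu_0) = \bar\nu$.

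The last and most delicate step is to produce from this an element of $\mathcal{P}_{\mu,H}$ (i.e.\ $\theta$-invariant) still projecting to $\nu$. My preferred approach is to mimic the construction of Section 2 directly in the $H$-setting: use the $H$-analog of \cite{bourbaki2}, \S7 Prop.~8 to produce the $J$-minuscule, $J$-dominant lift of $\nu$ in $Y$, take its \emph{fractional part} element $z'_H \in Y$ (which automatically lies in $\mathfrak{b}$ and is $\theta$-invariant by construction), and then verify $z'_H \in \Conv(W_H \mu)$. By the folding identity this is equivalent to $z'_H \in \Conv(W\mu)$, which in turn, via the (now established) simply-laced Proposition \ref{winning}, reduces to showing $\langle z'_H, \alpha^\vee \rangle \geq -1$ for every positive coroot $\alpha^\vee$ of $R$ (not merely of $R_H$). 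This is where condition $(\dagger)$ does the real work: because the roots in a $\theta$-orbit are mutually orthogonal, the $H$-$J$-minuscule property of $z'_H$ propagates to the individual $G$-simple coroots in the orbit, and the $G$-version of Proposition \ref{genkrprop} then gives the required bound on all positive coroots of $R$.

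The main obstacle I anticipate is the compatibility check in this last step — namely, that the $H$-$J$-minuscule condition on $z'_H$, together with the normalization of the $H$-simple coroots as $\theta$-orbit sums of $G$-simple coroots, implies the $G$-$J$-minuscule condition needed to invoke Proposition \ref{winning}. The orthogonality in $(\dagger)$ is exactly what makes this compatibility hold, but verifying it cleanly for each of the folded types $B_n$, $C_{n-1}$, $F_4$, $G_2$ listed in the section will require some careful bookkeeping with the coroot normalizations (in particular, for the order-three folding $D_4 \to G_2$, where the orbit-sum description has a slightly different flavor).
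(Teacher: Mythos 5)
Your proposal takes a genuinely different route from the paper's, and the difference matters. The paper's proof is short and avoids the obstacle you flag: it takes the \emph{unique} $M$-dominant, $M$-minuscule representative $\tilde\nu \in X$ of $\nu$ (via \cite{bourbaki2}, Ch.~VIII, \S7, Prop.~8), observes that since $\theta$ commutes with the whole $M$-structure, $\theta(\tilde\nu)$ is again the unique $M$-dominant, $M$-minuscule representative of $\theta(\nu)=\nu$, and concludes by uniqueness that $\tilde\nu$ is $\theta$-fixed, i.e.\ $\tilde\nu\in Y$. Then the simply-laced result already proved gives $\tilde\nu\in\mathcal{P}(G,\mu)$, and Lemma~\ref{plemma} (built from Lemmas~\ref{hdom} and~\ref{hleq}, which transfer $G$-dominance and $\stackrel{G}{\leq}$ to $H$-dominance and $\stackrel{H}{\leq}$ for elements of $Y$) gives $\tilde\nu\in\mathcal{P}(H,\mu)$, finishing the proof. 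No minuscule condition ever needs to be transferred from $H$ to $G$.

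Your proposal instead aims to redo the Section~2 construction intrinsically in the $H$-setting (build an $H$-$J$-minuscule, $H$-$J$-dominant lift, take its fractional part $z'_H$, then verify $z'_H\in\Conv(W\mu)$ via Propositions~\ref{winning} and~\ref{genkrprop} for $G$). The step you call the ``main obstacle'' is a genuine gap, not mere bookkeeping. Being $H$-$J$-minuscule bounds $\langle z'_H,\beta^\vee\rangle$ only against $H$-$J$-coroots (the orbit sums $N(\alpha^\vee)$), and to apply Proposition~\ref{genkrprop} for $G$ you need $\langle z'_H - y, \gamma^\vee\rangle\in[-1,1]$ for \emph{every} coroot $\gamma^\vee$ of the $G$-subsystem $R_J$, including non-simple ones whose $\theta$-orbit sums are not $H$-coroots at all; $(\dagger)$ is stated only for simple roots and does not directly control these. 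Moreover, the fractional-part operation is basis-dependent: taking fractional parts with respect to the $H$-simple roots does not produce the fractional-part element with respect to the $G$-simple roots, so it is not clear your $z'_H$ is even of the form to which Proposition~\ref{winning} applies. You also never use the $\nu_0\in\mathcal{P}_\mu$ produced by your appeal to Theorem~\ref{thm}; that step is dead weight. The fix is to drop the reconstruction entirely and instead exploit the uniqueness of the minuscule lift to get $\theta$-invariance for free, as the paper does.
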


Before we begin the proof of this proposition, we prove some useful results. First, a remark.

\begin{remark}
\emph{Let us denote by $\mathcal{O}_{\alpha}$ the orbit of $\alpha$ in $\Delta$ under $\theta$. Because of the condition $(\dagger)$ on $\theta$, we have that the coroots corresponding to the simple roots for $(T^{\theta}, H)$ are $N(\alpha^{\vee}) : = \sum_{\gamma \in \mathcal{O}_{\alpha}} \gamma^{\vee},$ where $\alpha$ varies through $\mathcal{R}$. We will need this fact in the proofs of the results that follow. The condition $(\dagger)$ guarantees that our answer is not $2N(\alpha^{\vee})$, which could otherwise happen for certain automorphisms $\theta$.)}
\label{orthogonal}
\end{remark}

\begin{lemma}
Let $\mu \in Y$. Then $\mu$ is $H$-dominant if and only if $\mu$ is $G$-dominant.
\label{hdom}
\end{lemma}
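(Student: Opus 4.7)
The plan is to unfold the two dominance conditions into systems of linear inequalities and observe, using the $\theta$-invariance of $\mu$, that they are literally the same system. Recall that $\mu \in Y = X^\theta$ is $G$-dominant iff $\langle \alpha, \mu \rangle \geq 0$ for every $\alpha \in \Delta$, and $H$-dominant iff $\langle \gamma, \mu \rangle \geq 0$ for every simple coroot $\gamma$ of $(H, T^\theta)$, which by Remark~\ref{orthogonal} are exactly the elements $N(\alpha^\vee) = \sum_{\gamma \in \mathcal{O}_\alpha} \gamma^\vee$ as $\alpha$ ranges over the orbit representatives $\mathcal{R}$.

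The key observation I would use is that, since $\mu$ is $\theta$-invariant, for any $\alpha \in \Delta$ and any power $k$ one has
$$\langle \theta^k\alpha, \mu\rangle = \langle \alpha, \theta^{-k}\mu\rangle = \langle \alpha, \mu\rangle,$$
so all roots in a given orbit $\mathcal{O}_\alpha$ pair with $\mu$ to the same value. Pairing $\mu$ with the simple coroot $N(\alpha^\vee)$ of $H$ therefore yields $|\mathcal{O}_\alpha|\cdot \langle \alpha, \mu\rangle$, which is non-negative if and only if $\langle \alpha, \mu\rangle \geq 0$. Thus $H$-dominance of $\mu$ is equivalent to the finite system $\langle \alpha, \mu\rangle \geq 0$ for all $\alpha \in \mathcal{R}$.

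On the $G$-side, the same $\theta$-invariance identity shows that the inequalities $\langle \alpha, \mu\rangle \geq 0$ for $\alpha \in \Delta$ collapse to one inequality per $\theta$-orbit, i.e.\ to the inequalities indexed by $\mathcal{R}$. So the $G$-dominance and $H$-dominance conditions reduce to exactly the same finite list of inequalities, and both implications follow simultaneously.

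There is essentially no obstacle here beyond a small bookkeeping point: one must be sure that the simple coroots of $H$ are really $N(\alpha^\vee)$ rather than, say, $2N(\alpha^\vee)$, which would not affect the sign conditions but is reassuring to verify. This is precisely what hypothesis $(\dagger)$ (orthogonality within each $\theta$-orbit in $\Delta$), as recorded in Remark~\ref{orthogonal}, ensures, so the argument goes through without further work.
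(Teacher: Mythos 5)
Your core idea --- use the $\theta$-invariance of $\mu$ to collapse the dominance inequalities to one per $\theta$-orbit --- is exactly the paper's, and the conclusion is correct. However, there is a roots-versus-coroots slip in the way you set up $H$-dominance. Since $\mu$ lies in $Y = X_*(T)^\theta$, it is a cocharacter, so $H$-dominance is the condition $\langle \beta, \mu\rangle \geq 0$ for the simple \emph{roots} $\beta$ of $(H, T^\theta)$, not for the simple coroots $N(\alpha^\vee)$. The elements $N(\alpha^\vee)$ supplied by Remark~\ref{orthogonal} are themselves cocharacters, so the pairing $\langle N(\alpha^\vee),\mu\rangle$ that you invoke is a pairing of two cocharacters; that is not the canonical pairing that defines dominance, and giving it meaning would require fixing a $W$-invariant form and identifying roots with coroots.

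The paper's one-line proof goes via the root side directly: the simple roots of $(H, T^\theta)$ are the restrictions $\alpha|_{T^\theta}$ of the simple roots $\alpha$ of $(G, T)$, and since $\mu$ factors through $T^\theta$ one has $\langle \alpha|_{T^\theta}, \mu\rangle = \langle \alpha, \mu\rangle$. Combined with your $\theta$-invariance identity $\langle \theta^k\alpha,\mu\rangle = \langle\alpha,\mu\rangle$, this shows the two dominance conditions are literally the same system of inequalities, with nothing further to chase. Your quantity $|\mathcal{O}_\alpha|\cdot\langle\alpha,\mu\rangle$ does carry the right sign under the inner-product identification, so the slip is not fatal to the conclusion, but the route through coroots introduces side issues (choice of form, normalization) that the restriction-of-roots argument avoids entirely; it is worth rephrasing your first paragraph so the dominance conditions are stated against roots from the outset.
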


\begin{proof}
The statement of the lemma is a direct consequence of the fact that the simple roots for $(T^{\theta}, H)$ are restrictions to $T^{\theta}$ of the simple roots for $(T, G)$ and the condition that $\mu \in Y$.
\end{proof}

\begin{lemma}
Let $\mu, \nu \in Y$. Then $\nu \stackrel{G}{\leq} \mu \Longleftrightarrow \nu \stackrel{H}{\leq} \mu$.
\label{hleq}
\end{lemma}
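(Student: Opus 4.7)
The plan is to prove the two directions separately, with the forward direction being essentially immediate from Remark \ref{orthogonal}, and the reverse direction requiring a $\theta$-averaging argument.

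First I would handle the easier implication $\nu \stackrel{H}{\leq} \mu \Rightarrow \nu \stackrel{G}{\leq} \mu$. By definition, $\nu \stackrel{H}{\leq} \mu$ means $\mu - \nu = \sum_{\alpha \in \mathcal{R}} c_\alpha N(\alpha^\vee)$ for non-negative integers $c_\alpha$, where (by Remark \ref{orthogonal}) $N(\alpha^\vee) = \sum_{\gamma \in \mathcal{O}_\alpha} \gamma^\vee$. Expanding gives $\mu - \nu = \sum_{\beta \in \Delta} n_\beta \beta^\vee$ with $n_\beta := c_\alpha$ whenever $\beta \in \mathcal{O}_\alpha$. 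All $n_\beta$ are non-negative integers, so $\nu \stackrel{G}{\leq} \mu$.

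For the converse, suppose $\nu \stackrel{G}{\leq} \mu$, i.e., $\mu - \nu = \sum_{\beta \in \Delta} n_\beta \beta^\vee$ with $n_\beta \in \mathbb{Z}_{\geq 0}$. The key observation is that $\mu - \nu \in Y = X^\theta$, since by hypothesis both $\mu$ and $\nu$ belong to $Y$. Applying $\theta$ and using that $\theta$ permutes the simple coroots (sending $\beta^\vee$ to $(\theta\beta)^\vee$), we get
\[
\sum_{\beta \in \Delta} n_\beta \beta^\vee \;=\; \theta(\mu - \nu) \;=\; \sum_{\beta \in \Delta} n_\beta (\theta \beta)^\vee \;=\; \sum_{\beta \in \Delta} n_{\theta^{-1}\beta}\, \beta^\vee.
\]
Since $\{\beta^\vee : \beta \in \Delta\}$ is linearly independent in $X \otimes_{\mathbb{Z}} \mathbb{R}$, the coefficients must match: $n_{\theta^{-1}\beta} = n_\beta$ for every $\beta \in \Delta$. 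Thus $\beta \mapsto n_\beta$ is constant on $\theta$-orbits, and we may define $c_\alpha := n_\alpha$ for each $\alpha \in \mathcal{R}$ and regroup:
\[
\mu - \nu \;=\; \sum_{\alpha \in \mathcal{R}} c_\alpha \sum_{\gamma \in \mathcal{O}_\alpha} \gamma^\vee \;=\; \sum_{\alpha \in \mathcal{R}} c_\alpha\, N(\alpha^\vee),
\]
with $c_\alpha \in \mathbb{Z}_{\geq 0}$. By Remark \ref{orthogonal} the elements $N(\alpha^\vee)$ are precisely the simple coroots of $H$, so this says exactly $\nu \stackrel{H}{\leq} \mu$.

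There is no real obstacle here; the only subtle ingredient is that condition $(\dagger)$ guarantees via Remark \ref{orthogonal} that the simple coroots of $H$ are the unweighted averages $N(\alpha^\vee)$ (rather than $2 N(\alpha^\vee)$), without which the integrality of the $c_\alpha$ could fail.
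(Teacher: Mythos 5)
Your argument is correct and follows essentially the same route as the paper: the paper also observes that since $\mu$ and $\nu$ are $\theta$-fixed, the coefficients in the expansion of $\mu-\nu$ over the simple coroots of $G$ must be constant on $\theta$-orbits, and then invokes Remark \ref{orthogonal} to identify the regrouped sums $N(\alpha^\vee)$ with the simple coroots of $H$. You have merely written out the linear-independence step justifying the constancy of coefficients, which the paper leaves implicit.
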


\begin{proof}
Recall that $\nu \stackrel{G}{\leq} \mu$, respectively $\nu \stackrel{H}{\leq} \mu$, means precisely that $\mu - \nu$ is a non-negative integral linear combination of the coroots corresponding to the simple roots for $G$, respectively for $H$. We have that
\begin{equation*}
\nu \stackrel{G}{\leq} \mu \Longleftrightarrow \mu - \nu = \sum_{\alpha \in \Delta} c_{\alpha} \alpha^{\vee},
\tag{$\ddagger$}
\end{equation*}
for some $c_{\alpha} \in \mathbb{Z}_{\geq 0}$. Note that since $\mu$ and $\nu$ are fixed by $\theta$, the coefficients $c_{\alpha}$ are constant on the orbits of $\theta$ on the set of the simple roots from $\Delta$. Because of the equivalence $(\ddagger)$, we must have $$\nu \stackrel{G}{\leq} \mu \Longleftrightarrow \mu - \nu = \sum_{\alpha \in \mathcal{R}} d_{\alpha} N(\alpha^{\vee}),$$ for some $d_{\alpha} \in \mathbb{Z}_{\geq 0}$. But, as mentioned in Remark \ref{orthogonal}, the coroots corresponding to the simple roots for $(T^{\theta}, H)$ are $N(\alpha^{\vee})$, where $\alpha$ varies through $\mathcal{R}$. Hence, the last equivalence, according to the definition of $\stackrel{H}{\leq}$, yields $$\nu \stackrel{G}{\leq} \mu \Longleftrightarrow \nu \stackrel{H}{\leq} \mu,$$ which we wanted to prove.
\end{proof}

\begin{lemma}
Let $\mu \in Y$. Denote by $\mathcal{P}(G,\mu)$ the set $\{ \nu \in X: \nu_{\emph{G-dom}} \stackrel{G}{\leq} \mu \}$, where $\nu_{\emph{G-dom}}$ stands for the unique element in $X$  that is in the Weyl orbit $W(\nu)$ and that is $G$-dominant. Similarly, we denote by $\mathcal{P}(H,\mu)$ the set $\{ \nu \in Y: \nu_{\emph{H-dom}} \stackrel{H}{\leq} \mu \}$, where $\nu_{\emph{H-dom}}$ stands for the unique element in $Y$  that is in the Weyl orbit $W_H(\nu)$ and that is $H$-dominant. Then we have that $$\mathcal{P}(H, \mu) = Y \cap \mathcal{P}(G, \mu).$$
\label{plemma}
\end{lemma}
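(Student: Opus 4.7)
The plan is to show both inclusions by identifying the two distinguished dominant representatives $\nu_{G\text{-dom}}$ and $\nu_{H\text{-dom}}$ for any $\nu \in Y$, and then invoking Lemma \ref{hleq} to translate between the two dominance orders.

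The main step is the following observation: for any $\nu \in Y$ we have $\nu_{G\text{-dom}}=\nu_{H\text{-dom}}$. To see this, first note that $W_H$ acts on $T^{\theta}$ and hence preserves $Y=X_*(T^{\theta})$, so $\nu_{H\text{-dom}}\in Y$. By Lemma \ref{hdom}, any element of $Y$ that is $H$-dominant is automatically $G$-dominant, so $\nu_{H\text{-dom}}$ is a $G$-dominant element of $X$. Since $W_H\leq W$, the element $\nu_{H\text{-dom}}$ lies in the $W$-orbit of $\nu$, and by the uniqueness of the $G$-dominant representative in $W\nu$, we conclude $\nu_{H\text{-dom}}=\nu_{G\text{-dom}}$.

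Granted this identification, the proof of the lemma is immediate. For the inclusion $\mathcal{P}(H,\mu)\subseteq Y\cap \mathcal{P}(G,\mu)$: if $\nu\in\mathcal{P}(H,\mu)$ then $\nu\in Y$ and $\nu_{H\text{-dom}}\stackrel{H}{\leq}\mu$; by the above, $\nu_{G\text{-dom}}=\nu_{H\text{-dom}}\in Y$, and Lemma \ref{hleq} (applied to $\mu,\nu_{G\text{-dom}}\in Y$) gives $\nu_{G\text{-dom}}\stackrel{G}{\leq}\mu$, so $\nu\in\mathcal{P}(G,\mu)$. For the reverse inclusion, if $\nu\in Y\cap\mathcal{P}(G,\mu)$ then $\nu_{G\text{-dom}}\stackrel{G}{\leq}\mu$; again using $\nu_{G\text{-dom}}=\nu_{H\text{-dom}}\in Y$ together with Lemma \ref{hleq} yields $\nu_{H\text{-dom}}\stackrel{H}{\leq}\mu$, hence $\nu\in\mathcal{P}(H,\mu)$.

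The only mildly nontrivial point is the identification of the two dominant representatives; once Lemma \ref{hdom} is in hand this is forced by uniqueness, so there is no real obstacle. Everything else is a direct appeal to Lemma \ref{hleq}, which was the actual work done earlier in comparing the two dominance orders on $Y$.
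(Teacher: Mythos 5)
Your proof is correct and follows essentially the same route as the paper, which simply states that the lemma is immediate from Lemmas \ref{hdom} and \ref{hleq}. You have spelled out the one step the paper leaves implicit -- that for $\nu\in Y$ the two dominant representatives $\nu_{G\text{-dom}}$ and $\nu_{H\text{-dom}}$ coincide (via $W_H\leq W$, stability of $Y$ under $W_H$, Lemma \ref{hdom}, and uniqueness) -- which is exactly the right observation.
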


\begin{proof}
Since $\mu$ is in $Y$, the result is immediate from Lemmas \ref{hdom} and \ref{hleq}.
\end{proof}

\begin{lemma}
We have the following commutative diagram where the vertical maps are the obvious projections
$$ \begin{tabular}{ccc}
$Y$ & $\subset$ & $X$ \\
$\downarrow$ & & $\downarrow$ \\
$Y_{M_H}$ & $\hookrightarrow$ & $X_M$ \\
$\downarrow$ & & $\downarrow$ \\
$Y_H$ & $\hookrightarrow$ & $X_G$. \\
\end{tabular} $$
\label{diagram}
\end{lemma}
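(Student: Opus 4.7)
The plan is to observe that the content of Lemma \ref{diagram} has two parts: (a) the two horizontal arrows $Y_{M_H}\hookrightarrow X_M$ and $Y_H\hookrightarrow X_G$ must be shown to be well-defined \emph{injections}; (b) the two squares must commute. Commutativity is essentially formal, so the real work is in (a), and this can be done by a direct adaptation of the argument already used in the proof of Lemma \ref{hleq}.

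For well-definedness, I would invoke Remark \ref{orthogonal}: the simple coroots of $H$ with respect to $T^{\theta}$ are precisely the elements $N(\alpha^{\vee})=\sum_{\gamma\in\mathcal{O}_{\alpha}}\gamma^{\vee}$ with $\alpha\in\mathcal{R}$, and each such $N(\alpha^{\vee})$ is visibly a nonnegative integer combination of simple coroots of $G$. Hence the coroot lattice of $H$ is contained in the coroot lattice of $G$, which gives a natural map $Y_H\to X_G$. The $\theta$-stable Levi $M\supset T$ has the subdiagram $\Delta_M\subset\Delta$ of simple roots, which is $\theta$-stable, and the simple coroots of $M_H=M^{\theta}$ are again the $N(\alpha^{\vee})$ for $\alpha$ ranging over a set of representatives of $\theta$-orbits in $\Delta_M$. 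The same observation therefore produces the map $Y_{M_H}\to X_M$.

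For injectivity I would repeat, mutatis mutandis, the computation of Lemma \ref{hleq}. Suppose $\nu\in Y$ maps to zero in $X_G$, so $\nu=\sum_{\alpha\in\Delta}c_{\alpha}\alpha^{\vee}$ for some $c_{\alpha}\in\mathbb{Z}$. Since $\theta(\nu)=\nu$ and $\theta$ permutes the simple coroots according to its action on $\Delta$, uniqueness of the expansion forces $c_{\alpha}=c_{\theta(\alpha)}$; therefore
\[
\nu=\sum_{\alpha\in\mathcal{R}}d_{\alpha}N(\alpha^{\vee}),\qquad d_{\alpha}\in\mathbb{Z},
\]
which lies in the coroot lattice of $H$ and thus maps to zero in $Y_H$. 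The identical argument, with $\Delta$ replaced by $\Delta_M$ and $\mathcal{R}$ by a set of $\theta$-orbit representatives in $\Delta_M$, yields injectivity of $Y_{M_H}\hookrightarrow X_M$.

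Finally, commutativity is automatic: each of the six maps in the diagram is obtained by composing the inclusion $Y\hookrightarrow X$ (or its descent to the quotients) with a natural quotient map by a coroot lattice, and by the inclusions of coroot lattices established above these factorizations are compatible. Thus both squares commute. I do not anticipate a real obstacle here; the only point that requires any care is the use of $\theta$-equivariance to force the coefficients $c_{\alpha}$ to be constant on $\theta$-orbits, but this is precisely the step already carried out in Lemma \ref{hleq} and relies on condition $(\dagger)$ via Remark \ref{orthogonal}.
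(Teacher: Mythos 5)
Your proposal is correct and takes essentially the same approach as the paper: both reduce the two injectivities to the claim, resting on Remark \ref{orthogonal} and condition $(\dagger)$, that the coroot lattice of $H$ (resp.\ of $M_H$) is exactly the intersection of $Y$ with the coroot lattice of $G$ (resp.\ of $M$). The only difference is that you spell out the $\theta$-equivariance computation already appearing in Lemma \ref{hleq}, whereas the paper states the lattice equality as an immediate consequence of Remark \ref{orthogonal}; this is a matter of exposition, not of substance.
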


\begin{proof}
We only need to explain why the horizontal maps are (natural) inclusions. This is clear for the first map. For the third map, recall from Remark \ref{orthogonal} that the coroots corresponding to the simple roots for $(T^{\theta}, H)$ are $N(\alpha^{\vee})$, where $\alpha$ varies through $\mathcal{R}$. This implies that the coroot lattice for $H$ is the intersection of $Y$ with the coroot lattice for $G$, and thus the third map is an inclusion.

Now we will prove that the second map is also an inclusion, with the proof being almost identical to that of the similar fact for the third map. Similar to Remark \ref{orthogonal}, because of condition $(\dagger)$, we have that the coroot lattice for $T^{\theta}$ in $M_H=M^{\theta}$ has a $\mathbb{Z}$-basis consisting of $N(\alpha^{\vee})$, where $\alpha$ varies through a set of representatives for orbits of $\theta$ on $\Delta_M$, and where $\Delta_M$ is the set of the simple roots for $M$. This implies that the coroot lattice for $T^{\theta}$ in $M_H=M^{\theta}$ is just the intersection of $Y$ with the coroot lattice for $T$ in $M$. This ensures that the second horizontal map is injective. That the diagram is commutative follows directly from the definitions of the maps involved.
\end{proof}

We now start the proof of Proposition \ref{nsl}.

\begin{proof}
It is clear that the left-hand side is contained in the right-hand side. The point is to show that the converse is true as well. Let $\nu \in Y_{M_H}$ be an element of the set appearing on the right-hand side in Proposition \ref{nsl}. We may assume that $\nu$ is $H$-dominant in $\mathfrak{b}_{M_H}$ (otherwise we could pick some other Borel $B^{\theta}$ in $H$ with respect to which $\nu$ is $H$-dominant). Thus we have the following important properties for $\nu \in Y_{M_H}$: $\nu$ is $H$-dominant, $\nu \stackrel{H}{\leq} \mu$, and $\nu$ and $\mu$ have the same image in $Y_H$.

Using lemmas \ref{hdom}, \ref{hleq}, and \ref{diagram} we see that: $\nu$ is $G$-dominant, $\nu \stackrel{G}{\leq} \mu$, and $\nu$ and $\mu$ have the same image in $X_G$. (Using Lemma \ref{diagram}, we are viewing $\nu$ as an element in $X_M$.) Let $\tilde{\nu} \in X$ be the unique $M$-dominant, $M$-minuscule representative of $\nu$. The results of Section 1 guarantee that $\tilde{\nu} \in \mathcal{P}(G,\mu)$. Then $\theta (\tilde{\nu})$ is the unique $M$-dominant, $M$-minuscule representative of $\theta(\nu)=\nu$. So $\theta(\tilde{\nu})=\tilde{\nu}$, in other words $\tilde{\nu} \in Y$. Using Lemma \ref{plemma} we see that, since $\tilde{\nu}$ lies in both $Y$ and $\mathcal{P}(G, \mu)$, it also lies in $\mathcal{P}(H, \mu)$. We already know that $\nu$ and $\mu$ have the same image in $Y_H$, and since $\tilde{\nu}$ evidently maps to $\nu$, we have that $\nu$ is an element of $\psi_{M_H}\left(\mathcal{P}_{\mu, H}\right)$, thus concluding the proof of our proposition.
\end{proof}

\section{The case of quasi-split groups}

We now work with groups that are quasi-split. Let us fix the notation, since it is slightly different from that introduced in the Introduction. Let $F$ be a finite extension of $\mathbb{Q}_p$ with uniformizing element $\pi$, and let $L$ be the completion of the maximal unramified extension of $F$ in some algebraic closure of $F$. Denote by $\mathfrak{o}_F$, resp. $\mathfrak{o}_L$, the ring of integers in $F$, resp. $L$, and by $\sigma$ the Frobenius automorphism of $L$ over $F$. Let $G$ be a connected reductive group that is quasi-split over $F$ and split over $L$. Let $A$ be a maximal split torus in $G$, and $T$ its centralizer. Let $B=TU$ be a Borel subgroup of $G$, containing $T$ and $U$ the unipotent radical of $B$. Let $P=MN$ be a parabolic subgroup containing $B$, with $M \supset T$ and $N$ the unipotent radical of $P$. Suppose that all of the above groups are defined over $\mathfrak{o}_F$.

The definition of affine Deligne-Lusztig varieties remains the same as in the split case: $$X_{\mu}^G(b):= \{ x \in G(L)/G(\mathfrak{o}_L) : \, x^{-1} b \sigma (x) \in G(\mathfrak{o}_L) \mu (\pi) G(\mathfrak{o}_L) \},$$ with $\mu \in X_*(T)$ dominant and $b \in M(L)$.

Let $X_M$ denote the quotient of the cocharacter lattice $X_*(T)$ of $T$ by the coroot lattice for $M$. The Frobenius automorphism $\sigma$ acts on $X_M$, and we denote by $Y_M$ the cooinvariants of this action, i.e., $Y_M:= X_M/(1-\sigma)X_M$. Write $Y$ for the coinvariants of $X_*(T)$, and note that we have the following commutative diagram
$$\begin{array}{ccc}
  X_*(T) & \rightarrow & X_M \\
  \downarrow &  &  \downarrow \\
  Y & \rightarrow & Y_M
\end{array}$$
where all the maps are surjective. We denote the map $X_*(T) \rightarrow Y$ by $\rho$. We write $\psi$ for the map $Y \rightarrow Y_M$ from the above diagram, and then write $\phi: X_*(T) \twoheadrightarrow Y_M$ for the composition $\psi \circ \rho$.

Denote by $\stackrel{P}{\preceq}$ the partial ordering on $Y_M$ defined as follows: For $y_1, y_2 \in Y_M$, we write $y_1 \stackrel{P}{\preceq} y_2$ if $y_2-y_1$ is a nonnegative integral linear combination of the images in $Y_M$ of the coroots $\{ \alpha_j^{\vee}: j \in J \}$ corresponding to simple roots $\{ \alpha_j: j \in J \}$ of $T$ in $N$.

Similarly to the Kottwitz maps in the split case from the Introduction, we again have such maps in the quasi-split case, $w_G: G(L) \rightarrow X_G$ and $w_M: M(L) \rightarrow X_M$. The latter induces a map $\kappa_M:B(M) \rightarrow Y_M$ (see \cite{kotiso} for the precise definition), where $B(M)$ stands for the $\sigma$-conjugacy classes in $M(L)$.

Similar to Theorem \ref{groupmazur} in the case of split groups, we have the following result:

\begin{theorem}  Let $\mu \in X_*(T)$ be dominant and let $b \in M(L)$ be a basic element such that $\kappa_M(b)$ lies in $Y_M^{+}$. Then $X_{\mu}^G(b)$ is non-empty if and only if $\kappa_M(b) \stackrel{P}{\preceq} \mu$.
\label{mainquasi}
\end{theorem}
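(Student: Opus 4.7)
The plan is to mirror the proof of Theorem~\ref{groupmazur} in the quasi-split setting, reducing the assertion to a root-theoretic convex-hull identity (a quasi-split Kottwitz-Rapoport conjecture, which should appear as Theorem~\ref{conjecture2}) and deducing that identity from the split case already established in Sections~2--4.

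The forward implication, $X_{\mu}^G(b)\neq\emptyset\Rightarrow\kappa_M(b)\stackrel{P}{\preceq}\mu$, is the Rapoport-Richartz generalization of Mazur's Inequality to quasi-split reductive groups, proved in \cite{rapritch} and subsumed by Theorem~1.1 of \cite{kot}; I would import it verbatim. For the reverse implication, the quasi-split analog of Theorem~4.3 of \cite{kot} supplies the equivalence
$$X_{\mu}^G(b)\neq\emptyset \iff \kappa_M(b)\in\phi(\mathcal{P}_{\mu}),$$
where $\mathcal{P}_{\mu}:=\{\nu\in X_*(T):\rho(\nu)=\rho(\mu),\ \nu\in\Conv(W\mu)\}$. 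The reverse implication will therefore follow once Theorem~\ref{conjecture2} identifies $\phi(\mathcal{P}_{\mu})$ with those $\nu\in Y_M$ whose image in $Y_G$ agrees with that of $\mu$ and whose image in $Y_M\otimes_{\Z}\R$ lies in $\mathrm{pr}_M(\Conv(W\mu))$; condition~(ii) translates into the ordering $\kappa_M(b)\stackrel{P}{\preceq}\mu$ via the same argument of Section~4.4 of \cite{kot} that handled the split case.

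To prove Theorem~\ref{conjecture2}, I would descend from the split Theorem~\ref{conjecture} by treating $\sigma$ as a diagram automorphism of $G_L$. Given $\nu\in Y_M$ in the right-hand side, lift it to a $\sigma$-fixed representative $\nu^\flat\in X_M$ whose image in $Y_M\otimes_{\Z}\R$ remains in $\mathrm{pr}_M(\Conv(W\mu))$, and then to the unique $M$-dominant, $M$-minuscule preimage $\tilde\nu\in X_*(T)$ above $\nu^\flat$; by uniqueness $\tilde\nu$ is automatically $\sigma$-stable. Theorem~\ref{conjecture}, applied to $\tilde\nu$ and $\mu$ (both of which are $\sigma$-fixed dominant coweights of the split group $G_L$), places $\tilde\nu$ in $\mathcal{P}_{\mu}$, and by construction $\phi(\tilde\nu)=\nu$, which yields the required inclusion $\nu\in\phi(\mathcal{P}_{\mu})$.

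The main obstacle is the interchange between $\sigma$-invariants and $\sigma$-coinvariants: the folding in Section~\ref{nonsimplylaced} was phrased for $\theta$-invariants ($H=G^\theta$), whereas $B(G)$ in the quasi-split case is controlled by the $\sigma$-coinvariants $Y_M=X_M/(1-\sigma)X_M$. The technical heart of the argument is therefore to show that every $\nu\in Y_M$ satisfying the two conditions admits a $\sigma$-fixed lift to $X_M$ preserving the convex-hull condition; a finite-order averaging together with a Lang-type integrality check should suffice. Once this is done, the analogs of Lemma~\ref{diagram} and Remark~\ref{orthogonal} transfer directly with $\theta$ replaced by $\sigma$, and the argument of Proposition~\ref{nsl} completes the descent, delivering Theorem~\ref{mainquasi}.
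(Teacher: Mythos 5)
Your treatment of the forward implication (cite \cite{rapritch} and \cite{kot}) and your reduction of the reverse implication to the quasi-split Kottwitz--Rapoport statement (Theorem \ref{conjecture2}) both agree with the paper. Where you part ways with the paper is in the proof of Theorem \ref{conjecture2} itself, and here there is a genuine gap.

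Your plan hinges on lifting $\nu_M \in Y_M = X_M/(1-\sigma)X_M$ to a \emph{$\sigma$-fixed} element $\nu^\flat \in X_M$. This is not a matter of a ``Lang-type integrality check'': the natural map $X_M^\sigma \rightarrow Y_M$ is in general \emph{not surjective} (its cokernel is controlled by $\hat H^0(\langle\sigma\rangle, X_M)$, i.e.\ by the failure of the norm map to be onto), and finite-order averaging produces an element of $X_M\otimes\Q$, not of $X_M$. The folding argument of Section \ref{nonsimplylaced} used invariants throughout ($Y_{M_H}$ is a \emph{subgroup} of $X_M$ by Lemma \ref{diagram}), whereas here $Y_M$ is a \emph{quotient}; the two situations are not parallel, and the paper explicitly flags this distinction (``cofolding'' vs.\ ``folding''). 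Even granting a $\sigma$-fixed lift, you would additionally need it to lie in the correct convex region so that Theorem \ref{conjecture} applies, and nothing forces the constrained set of $\sigma$-fixed lifts to meet that region.

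The paper avoids all of this by never trying to climb back up to the invariants side. It defines $\mathcal{P}'_{\rho(\mu)}\subset Y$ with respect to the relative Weyl group $W'$ and reduces everything to the single identity $\rho(\mathcal{P}_\mu)=\mathcal{P}'_{\rho(\mu)}$. The easy inclusion $\rho(\mathcal{P}_\mu)\subset\mathcal{P}'_{\rho(\mu)}$ uses the compatibility of $\rho$ with dominance and the orderings. For the hard inclusion, one reduces by transitivity of $\stackrel{P}{\preceq}$ to the case where $\rho(\mu)$ \emph{covers} $\nu'$, invokes Stembridge's covering lemma (\cite{stem2}, Cor.\ 2.7; also \cite{rapoport}) to write $\nu' = \rho(\mu)-\beta^\vee$ for a single positive coroot $\beta^\vee$ of the cofolded system, chooses a coroot $\gamma^\vee$ of $G_L$ with $\rho(\gamma^\vee)=\beta^\vee$ and $\langle\mu,\gamma\rangle\geq 1$ (using dominance of $\nu'$), and shows directly that $\nu:=\mu-\gamma^\vee$ lies in $C^+_\mu$ and satisfies $\langle\nu,\alpha\rangle\geq -1$ for all $\alpha\in R^+$. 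Propositions \ref{prop2} and \ref{GS} then put $\nu$ in $\mathcal{P}_\mu$. You would need to replace your lifting step with something along these lines --- in particular, the covering-relation reduction and the minuscule-Weyl-element criterion are essential and do not appear in your proposal.
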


One implication, namely that $X_{\mu}^G(b)$ being non-empty implies $\kappa_M(b) \stackrel{P}{\preceq} \mu$, is the group-theoretic version of Mazur's Inequality and a proof of this fact can be found in \cite{kot}, Theorem 1.1, part (1). For the converse, Kottwitz and Rapoport (cf. \cite{kot}, \S4.3) showed that it follows from Theorem \ref{conjecture2} below, which they conjectured to be true. To state their conjecture, we need some more notation.

We fix a dominant element $\mu \in X_*(T)$, and, as in the Introduction, we define the set $\mathcal{P}_{\mu}: = \{ \nu \in X_*(T): \nu = \mu$ in $X_G, \, \nu \in \Conv(W\mu)\}$, where $X_G$ is the quotient of $X_*(T)$ by the coroot lattice for $G$, and $\Conv(W\mu)$ stands for the convex hull in $X_*(T)\otimes_{\mathbb{Z}}\mathbb{R}$ of the Weyl group orbit of $\mu$. Write $\mathcal{P}_{\mu, M}$ for the image of $\mathcal{P}_{\mu}$ under the map $\phi: X \twoheadrightarrow Y_M$.

Let $A_P$ be the maximal split torus in the center of $M$ and let $\mathfrak{a}_P:=X_*(A_P)\otimes_{\mathbb{Z}}\mathbb{R}$, where the last space is viewed as a subspace of $X_*(T)\otimes_{\mathbb{Z}}\mathbb{R}$. Identifying $Y_M\otimes_{\mathbb{Z}}\mathbb{R}$ with $\mathfrak{a}_P$, we write $Y_M^+$ for the subset of $Y_M$ consisting of elements whose images in $\mathfrak{a}_P$ lie in the set
\begin{equation*}
\{ x\in \mathfrak{a}_P : \langle \alpha, x \rangle > 0, \, \text{for all roots} \,\, \alpha \,\, \text{of} \,\, A_P\,\, \text{in}\,\, N \}.
\end{equation*}

Theorem \ref{mainquasi} follows from the following

\begin{theorem}\label{conjecture2} \emph{(Kottwitz-Rapoport Conjecture; quasi-split case)} Let $\mu \in X_*(T)$ be dominant and $\nu_M \in Y_M^+$. The following are equivalent:
\begin{itemize}
\item[(i)] $\nu_M \stackrel{P}{\preceq} \mu$
\item[(ii)] $\nu_M \in \mathcal{P}_{\mu, M}$.
\end{itemize}
\end{theorem}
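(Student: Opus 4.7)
The plan is to reduce Theorem~\ref{conjecture2} to the split-case result, Theorem~\ref{conjecture}, via a Frobenius symmetrization that parallels the folding argument of Section~\ref{nonsimplylaced}. The pivotal observation is that for any $\sigma$-invariant $\nu \in X_M$ lifting $\nu_M \in Y_M$, the positivity conditions $\nu \in X_M^+$ and $\nu_M \in Y_M^+$ are equivalent: for such $\nu$, the pairing with a root of $A_M$ in $N$ depends only on the root's $\sigma$-orbit and, under the canonical identification $(X_M \otimes_{\Z} \R)^\sigma = \mathfrak{a}_P$, agrees with the pairing of $\nu_M$ with the corresponding root of $A_P$ in $N$.

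For (i)$\Rightarrow$(ii): suppose $\mu - \nu_M = \sum_{i \in I \setminus J} c_i \bar{\alpha}_i^\vee$ in $Y_M$ with $c_i \in \Z_{\geq 0}$. Because $\sigma$ stabilizes $P$, it permutes $\{\alpha_i^\vee\}_{i \in I \setminus J}$; and because $Y_M$ is the $\sigma$-coinvariants of $X_M$, the images $\bar{\alpha}_i^\vee$ within a single $\sigma$-orbit coincide. We may thus take the $c_i$ constant on $\sigma$-orbits, whereupon $\sum_i c_i \alpha_i^\vee \in X_M$ is $\sigma$-invariant. The element $\nu := \mu - \sum_i c_i \alpha_i^\vee \in X_M$ (via the image of $\mu$) is then a $\sigma$-invariant lift of $\nu_M$ satisfying $\nu \stackrel{P}{\leq} \mu$ in $X_M$ and $\nu \equiv \mu$ in $X_G$; by the positivity equivalence, $\nu \in X_M^+$. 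Theorem~\ref{conjecture} (combined with the equivalence recorded in Section~4.4 of \cite{kot}) then places $\nu$ in $\varphi_M(\mathcal{P}_\mu)$, and lifting to $\tilde{\nu} \in \mathcal{P}_\mu$ with $\varphi_M(\tilde{\nu}) = \nu$ and projecting further to $Y_M$ yields $\phi(\tilde{\nu}) = \nu_M$, so $\nu_M \in \mathcal{P}_{\mu, M}$.

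For (ii)$\Rightarrow$(i): start with $\tilde{\nu} \in \mathcal{P}_\mu$ such that $\phi(\tilde{\nu}) = \nu_M$, and form the Frobenius orbit sum $\tilde{\eta} := \sum_{l=0}^{k-1} \sigma^l \tilde{\nu}$, where $k$ is the order of $\sigma$ acting on $X_*(T)$. Then $\tilde{\eta} \in X_*(T)$ is $\sigma$-invariant, satisfies $\tilde{\eta} \equiv k\mu$ in $X_G$, and lies in $\Conv(W(k\mu))$, so $\tilde{\eta} \in \mathcal{P}_{k\mu}$. Its image $\eta := \varphi_M(\tilde{\eta})$ is a $\sigma$-invariant lift of $k\nu_M$; since $k\nu_M \in Y_M^+$, the positivity equivalence gives $\eta \in X_M^+$, and Theorem~\ref{conjecture} applied to $k\mu$ forces $\eta \stackrel{P}{\leq} k\mu$ in $X_M$. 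Projecting this relation to $Y_M$ and collecting terms by $\sigma$-orbit expresses $k(\mu - \nu_M)$ as a non-negative integer combination of the orbit-sum generators $\tilde{\alpha}_{O_t}^\vee \in Y_M$, one per $\sigma$-orbit $O_t$ of $I \setminus J$. Under the identification $Y_M \otimes_{\Z} \R = \mathfrak{a}_P$ these generators are positive scalar multiples of the relative simple coroots for $(G, A_P)$, hence are $\Z$-linearly independent; since $\mu - \nu_M$ already lies in their $\Z$-span (because $\nu_M \equiv \mu$ in $Y_G$), the uniqueness of its integer expression forces the non-negative rational coefficients obtained after dividing by $k$ to be non-negative integers, yielding $\nu_M \stackrel{P}{\preceq} \mu$.

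The main obstacle is the final integrality step in (ii)$\Rightarrow$(i): one must descend from a non-negative integer expression for $k(\mu - \nu_M)$ to a non-negative integer expression for $\mu - \nu_M$ itself, which is not automatic in the presence of potential torsion. It is resolved by the $\Z$-linear independence of the orbit-sum generators $\tilde{\alpha}_{O_t}^\vee$ in $Y_M$ (inherited from the relative root system on $\mathfrak{a}_P$) together with the fact that $\mu - \nu_M$ already sits in their $\Z$-span. Everything else reduces, by the $\sigma$-invariant lifting trick, to the simply-laced and non-simply-laced split-case results established in Sections~2--4.
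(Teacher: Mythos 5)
Your argument for the easy direction (ii)$\Rightarrow$(i) is vastly more complicated than it needs to be --- the paper simply observes this is immediate from the definitions --- and the Frobenius orbit-sum trick you use there has its own subtle issue (when $\mu$ is not $\sigma$-fixed, $\Conv(W\mu)$ need not be $\sigma$-stable, so $\sigma^l\tilde{\nu}$ need not lie in it, and $\tilde{\eta}\in\Conv(W(k\mu))$ is not clear). But the real problem is in the hard direction (i)$\Rightarrow$(ii), where your lift from $Y_M$ to $X_M$ does not work. You start from $\mu - \nu_M = \sum_{i\in I\setminus J} c_i\bar{\alpha}_i^{\vee}$ in $Y_M$ with $c_i\in\Z_{\geq 0}$ and then say ``we may thus take the $c_i$ constant on $\sigma$-orbits.'' The coroot images $\bar{\alpha}_i^{\vee}$ do coincide on a $\sigma$-orbit $O$, so the data you really have is the orbit total $c_O := \sum_{i\in O} c_i \in \Z_{\geq 0}$; to make the coefficients constant on $O$ you would set each $c_i = c_O/|O|$, which is generally \emph{not} an integer. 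Without integrality, $\nu := \mu - \sum_i c_i\alpha_i^{\vee}$ need not lie in $X_M$, and you have nothing to feed into Theorem~\ref{conjecture}. This is precisely the obstruction the paper's proof is designed to circumvent, and your writeup does not address it.

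The paper takes a genuinely different route that avoids lifting a general non-negative integral combination. It works with $Y = X_*(T)_{\sigma}$ (the \emph{cofolded} root datum) rather than trying to lift from $Y_M$ to $X_M$, shows $\rho(\mathcal{P}_\mu) = \mathcal{P}'_{\rho(\mu)}$, and then reduces the inequality $\nu' \stackrel{P}{\preceq} \rho(\mu)$, by transitivity, to the case where $\rho(\mu)$ \emph{covers} $\nu'$. Stembridge's lemma (\cite{stem2}, Cor.~2.7) then says $\rho(\mu) - \nu' = \beta^{\vee}$ for a \emph{single} positive coroot $\beta^{\vee}$ of the cofolded system, and a single coroot always lifts: one chooses $\gamma^{\vee}$ in $R$ with $\rho(\gamma^{\vee}) = \beta^{\vee}$ and $\langle\mu,\gamma\rangle \geq 1$, sets $\nu = \mu - \gamma^{\vee}$, and invokes Proposition~\ref{GS} to conjugate $\nu$ into the dominant chamber by a $\nu$-minuscule element without leaving $\Conv(W\mu)$. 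The integrality difficulty simply never arises because one only ever subtracts one coroot at a time. If you want to salvage your approach, you would need to replace the naive averaging by exactly this cover-relation reduction, at which point you would have reproduced the paper's argument.
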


(In the condition (i) above we consider $\mu$ as an element of $Y_M$.) One sees immediately that (ii) implies (i). The point is to prove that (i) implies (ii). We give a proof of this implication below.

In her Ph.D. thesis \cite{cathy2}, Lucarelli proved Theorem \ref{conjecture2} for unitary groups of rank 3, 4, and 5. Many of her arguments are general and apply to other groups, however, so we will use her ideas and exposition. The crucial added ingredient here is the use of a lemma of Stembridge and of Proposition \ref{GS}.

\subsection{Proof of Theorem \ref{conjecture2}}

Suppose that $\nu_M \in Y_M^+$ and that $\nu_M \stackrel{P}{\preceq} \mu$, where $\mu \in X_*(T)$ is dominant. We would like to prove that there exists an element $\nu \in \mathcal{P}_{\mu}$ such that $\nu \mapsto \nu_M$ under the map $\phi: X \twoheadrightarrow Y_M$. For this purpose, define $$\mathcal{P}_{\rho(\mu)}': = \{ y \in Y: (i) \, \, y \,\, \text{and} \,\, \rho(\mu)\,\, \text{have same image in}\,\, Y_G, (ii) \,\, y\in \Conv(W'\rho(\mu))\},$$ where $W'$ is the Weyl group associated with $Y$, i.e., the relative Weyl group $N(A)(F)/T(F)$, and $\Conv(W'\rho(\mu))$ is the convex hull in $Y\otimes_{\mathbb{Z}}\mathbb{R}$ of the orbit $W'\rho(\mu)$. Then, since we know that Theorem \ref{conjecture2} is true for all the corresponding root systems of reduced and non-reduced type (see Remark \ref{remark} below), we can find $\rho(\nu) \in \mathcal{P}_{\rho(\mu)}'$ such that $\psi(\rho(\nu))=\nu_M$. Thus it is sufficient to prove that the image of $\mathcal{P}_{\mu}$ under the map $\rho: X_*(T) \rightarrow Y$ equals $\mathcal{P}_{\rho(\mu)}'$.

\begin{remark}\label{remark}
\emph{In the split case, since we did not need it there, we did not consider the root system of type $BC_n$, the only non-reduced irreducible root system. However, one can deduce Theorem \ref{conjecture} for $BC_n$ through the process of folding (the root system $A_{2n}$), where one no longer assumes condition (\dag) from the previous section.}
\end{remark}

We first show that $\rho (\mathcal{P}_{\mu}) \subset \mathcal{P}_{\rho(\mu)}'$. Suppose that $x \in \mathcal{P}_{\mu}$. Then $x$ has the same image in $X_G$ as $\mu$, under the canonical map $X_*(T) \twoheadrightarrow X_G$. Hence $\rho(x)$ and $\rho(\mu)$ have the same image in $Y_G$. So, it suffices to prove that $x \in \Conv(W\mu)$ implies $\rho(x) \in \Conv(W'\rho(\mu))$. For this, we will use two easy facts (whose proofs are omitted):

\begin{itemize}
\item[(a)] If $x$ is dominant for $X_*(T)$, then $\rho(x)$ is dominant for $Y$, and
\item[(b)] If $x \stackrel{!}{\geq} 0$ for $X_*(T)$, then $\rho(x) \stackrel{P}{\succeq} 0$ for $Y$.
\end{itemize}
(Here $\stackrel{!}{\geq}$ denotes the usual partial ordering in $X_*(T)$, where $x_1 \stackrel{!}{\geq} x_2 $ means that $x_1-x_2$ is a nonnegative integer linear combination of simple coroots of $T$ in $N$.)

From $x\in \Conv(W\mu)$ and $\mu$ being dominant, we get that $wx \stackrel{!}{\leq} \mu$ for all $w \in W$, and thus $w'x \stackrel{!}{\leq} \mu$ for all $w' \in W'$, since we can regard $W'$ as a subgroup of $W$. Using (a) and (b) we then get that $\rho(\mu)$ is dominant and that $\rho(w'x) \stackrel{P}{\preceq} \rho(\mu)$ for all $w' \in W'$. But the action of $W'$ commutes with $\rho$, so we have $w'\rho(x) \stackrel{P}{\preceq} \rho(\mu)$ for all $w' \in W'$, and thus $\rho(x) \in \Conv(W'\rho(\mu))$, since $\rho(\mu)$ is dominant. This shows that $x \in \Conv(W\mu)$ implies $\rho(x) \in \Conv(W'\rho(\mu))$, as desired.

Now that we know that $\rho (\mathcal{P}_{\mu}) \subset \mathcal{P}_{\rho(\mu)}'$, we would like to prove the other inclusion. Suppose that $\nu' \in \mathcal{P}_{\rho(\mu)}'$. Without loss of generality we may assume that $\nu'$ is dominant. Then we have that $\nu' \stackrel{P}{\preceq} \rho(\mu)$. Due to the transitive property of $\stackrel{P}{\preceq}$, we only need to consider the case when $\rho(\mu)$ covers $\nu'$. Recall that we say that $\rho(\mu)$ \emph{covers} $\nu'$ if for any $\upsilon$ with $\nu' \stackrel{P}{\preceq} \upsilon \stackrel{P}{\preceq} \rho(\mu)$ we have that $\upsilon = \nu'$ or $\upsilon = \rho(\mu)$. Suppose, therefore, that $\rho(\mu)$ covers $\nu'$. Then using a lemma of Stembridge (\cite{stem2}, Cor. 2.7; see also \cite{rapoport}, Lemma 2.3, for an alternative proof, due to Waldspurger, of this result) we can conclude that there exists a positive coroot $\beta^{\vee}$ such that $\nu'= \rho(\mu) - \beta^{\vee}$. Thus, in order to prove Theorem \ref{conjecture2}, it suffices to show the following:

\begin{proposition} \label{problem}
There exists an element $\nu \in \mathcal{P}_{\mu}$ such that $\rho(\nu) = \rho(\mu)-\beta^{\vee}$.
\end{proposition}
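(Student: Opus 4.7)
\emph{Plan.} The strategy is to exhibit $\nu$ concretely as $\mu$ minus a lift of the relative coroot $\beta^\vee$ to the absolute coroot lattice of $G$, and then apply the minuscule-Weyl-element machinery of Propositions \ref{GS} and \ref{prop2} (in the absolute root system of $G$) to establish $\nu \in \Conv(W\mu)$.

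I would take $\tilde{\beta}^\vee \in X_*(T)$ to be a sum of absolute positive coroots projecting to $\beta^\vee$ under $\rho$; explicitly, the sum over a single $\sigma$-orbit of a suitable absolute positive coroot, with a slightly more subtle choice needed in the non-reduced $BC_n$ case that by Remark \ref{remark} arises from folding $A_{2n}$. Setting $\nu := \mu - \tilde{\beta}^\vee$, one has immediately that $\rho(\nu) = \nu'$ and that $\nu$ and $\mu$ agree modulo the coroot lattice of $G$, so both required congruences are automatic.

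It then remains to verify $\nu \in \Conv(W\mu)$. I would establish two conditions: (a) $\nu \in C_\mu^+$, i.e., $\langle \nu, \omega_i \rangle \leq \langle \mu, \omega_i \rangle$ for every fundamental weight $\omega_i$ of $G$; and (b) $\langle \nu, \gamma \rangle \geq -1$ for every positive absolute root $\gamma$ of $G$. Granting these, Proposition \ref{GS} supplies a $\nu$-minuscule $w' \in W$ with $w'(\nu)$ dominant, and Proposition \ref{prop2} then forces $w'(\nu) \in C_\mu^+$, whence $w'(\nu) \in \Conv(W\mu)$ and therefore $\nu \in \Conv(W\mu)$. Condition (a) is immediate, since $\tilde{\beta}^\vee$ is a nonnegative integral sum of positive coroots, so $\langle \tilde{\beta}^\vee, \omega_i \rangle \geq 0$.

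The main obstacle is (b), equivalently the bound $\langle \tilde{\beta}^\vee, \gamma \rangle \leq \langle \mu, \gamma \rangle + 1$. This is precisely where the hypothesis $\nu' \in Y_M^+$ must enter: dominance of $\nu'$ for the relative root system, transferred via pairing with the $\sigma$-average of the orbit of $\gamma$, should yield the desired absolute-level inequality. I expect the argument to parallel the proof of Proposition \ref{genkrprop} — minimize the linear functional $u \mapsto \langle u, \gamma \rangle$ over the polytope of admissible perturbations cut out by the relative dominance conditions, and reduce to a distinguished boundary vertex — with extra care required in the non-reduced $BC_n$ case, where condition $(\dagger)$ fails and the natural lift of the short-root coroot must be chosen more carefully.
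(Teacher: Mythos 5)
The central miscalculation is in the choice of lift. You propose taking $\tilde\beta^\vee$ to be the sum of the absolute coroots in a $\sigma$-orbit, but $\rho\colon X_*(T)\to Y$ is the projection to \emph{coinvariants}, so every member of a $\sigma$-orbit already has the same image under $\rho$. Summing a $\sigma$-orbit of size $k$ thus produces an element with $\rho(\tilde\beta^\vee)=k\,\beta^\vee$, not $\beta^\vee$. (You may be conflating this with the invariants picture of Section 4, where the coroot for $H$ is $N(\alpha^\vee)=\sum_{\gamma\in\mathcal O_\alpha}\gamma^\vee$; condition $(\dagger)$ and that sum belong to folding, not to the cofolding of the present section.) The correct lift, as the paper uses, is a \emph{single} absolute positive coroot $\gamma^\vee$ with $\rho(\gamma^\vee)=\beta^\vee$.

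Once that is fixed, your step (b) is also harder than it needs to be: you anticipate a minimization argument parallel to Proposition \ref{genkrprop}, but the paper's argument is much more elementary and you have missed the key observation that makes it so. One first shows that among the absolute positive coroots $\gamma^\vee$ with $\rho(\gamma^\vee)=\beta^\vee$ there is one satisfying $\langle\mu,\gamma\rangle\ge 1$. This is where $\nu'=\rho(\mu)-\beta^\vee$ being dominant enters: if every such $\gamma$ had $\langle\mu,\gamma\rangle=0$, then $\langle\rho(\mu),\beta\rangle=0$ and hence $\langle\nu',\beta\rangle=-2<0$, a contradiction. Taking $\nu:=\mu-\gamma^\vee$, the inequality $\langle\nu,\alpha\rangle\ge -1$ for all $\alpha\in R^+$ is then immediate from simply-lacedness of the absolute root system $R$: for $\alpha\ne\gamma$ one has $\langle\gamma^\vee,\alpha\rangle\le 1$, so $\langle\nu,\alpha\rangle\ge 0-1=-1$, and for $\alpha=\gamma$ one has $\langle\nu,\gamma\rangle\ge 1-2=-1$. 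No polytope minimization is needed. Your final concern about extra care in the $BC_n$ case is also misplaced for this particular proposition: the non-reduced system appears only as the \emph{relative} root system $R'$, while the present argument lives entirely in the absolute (simply-laced) $R$, and the paper's Remark \ref{remark} about $BC_n$ concerns the relative-level input, not this lifting step.
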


\begin{proof} Denote by $R$ the root system formed by the coroots of $G$ in $X_*(T)$, and by $R'$ the one obtained by taking the image of $R$ under $\rho$. More precisely, the coroots of $R'$ are obtained by taking the images of the coroots of $R$ under the map $\rho$.

\begin{remark}
\emph{Note that we do not get $R'$ from $R$ by \emph{folding}, which would amount to taking invariants under the automorphism $\sigma$. Rather, we are \emph{cofolding} the root system $R$ to get $R'$, i.e., we are taking the coinvariants under the action of $\sigma$. For example, folding $A_{2n-1}$ would yield $B_n$, but cofolding $A_{2n-1}$ yields $C_n$. Also, we remark again that we are working with coroots and not roots.}
\end{remark}

One sees immediately that there exists a (positive) coroot $\gamma^{\vee}$ such that $\rho (\gamma^{\vee}) = \beta^{\vee}$  and $\langle \mu, \gamma \rangle \geq 1.$ Indeed, since $\mu$ is dominant, we have that for all coroots $\gamma^{\vee}$ with the property that $\rho (\gamma^{\vee}) = \beta^{\vee}$, we must have that $\langle \mu, \gamma \rangle \geq 0$. If for all these $\gamma$ we had $\langle \mu, \gamma\rangle = 0$, then we would get $\langle \rho(\mu), \beta \rangle = 0$. But this would give $\langle \nu', \beta \rangle = -2,$ contradicting the assumed dominance of $\nu'$.

Now we put  $\nu: = \mu - \gamma^{\vee}$. Denote by $\varpi_j$, $j \in J$, the set of fundamental weights, where $\varpi_j$ corresponds to the simple coroot $\alpha_j^{\vee}$, in $X_*(T)$. Recall the definition of the cone $$C^+_{\mu}:= \{ u \in X_*(T)\otimes_{\mathbb{Z}}\mathbb{R} \, |\, \langle u, \varpi_j \rangle  \leq \langle \mu, \varpi_j \rangle  \}$$ and see immediately that $\nu \in C^+_{\mu}$. Using Proposition \ref{prop2}, we see that Proposition \ref{problem} follows if we show that there exists an element $w\in W$ such that $w$ is $\nu$-minuscule and $w(\nu)$ is dominant. But, from Proposition \ref{GS} we have that this is the case if and only if
\begin{equation*}
\langle \nu, \alpha \rangle \geq -1, \,\, \text{for all $\alpha \in R^+$}.
\end{equation*}
It remains to prove that these inequalities hold in our case.

Recall that $\nu = \mu - \gamma^{\vee}$, where $\gamma^{\vee}$ is a positive coroot such that $\langle \mu, \gamma \rangle \geq 1.$ If $\alpha \in R^+ \setminus \{\gamma\}$, since $R$ is simply-laced, it is well known that we must have $\langle \gamma^{\vee},\alpha \rangle \leq 1$. Therefore $\langle \nu, \alpha \rangle = \langle \mu, \alpha \rangle - \langle \gamma^{\vee}, \alpha \rangle \geq 0 - 1 =-1.$ For $\alpha = \gamma$, we have $\langle \nu , \gamma \rangle = \langle \mu , \gamma \rangle - \langle \gamma^{\vee}, \gamma \rangle \geq 1 -2 = -1.$ This concludes the proof of Proposition \ref{problem} and therefore of Theorem \ref{conjecture2}.
\end{proof}

\def\refname{R\MakeLowercase{eferences}}


\begin{thebibliography}{99}
\bibitem{bourbaki} N. BOURBAKI, \textsl{Elements of Mathematics, Lie groups and Lie algebras, Chapters 4-6}, Springer-Verlag, Berlin Heidelberg 2002.
\bibitem{bourbaki2} N. BOURBAKI, \textsl{\'El\'ements de Math\'ematique, Groups et alg\'ebres de Lie, Chapitres 7 et 8}, Springer-Verlag, Berlin Heidelberg 2006.
\bibitem{fulton} W. FULTON, \textsl{Introduction to Toric Varieties}, Ann. of Math. Stud. 131, Princeton Univ. Press, Princeton, NJ, 1993.
\bibitem{qendrim} Q. R. GASHI, \textsl{Vanishing results for toric varieties associated to $GL_n$ and $G_2$}, Transform. Groups vol. 13, no. 1 \textbf{(2008)}, 149--171.
\bibitem{qendrim2} Q. R. GASHI, \textsl{A vanishing result for toric varieties associated with root systems}, Albanian J. Math. (2007), \textbf{4}, 235--244.
\bibitem{gashischedler} Q. R. GASHI and T. SCHEDLER, \textsl{On dominance and minuscule Weyl group elements}, preprint in preparation.
\bibitem{GHKR} M. G\"ORTZ, T. J. HAINES, R. KOTTWITZ and D. C. REUMAN, \textsl{Dimensions of some affine Deligne-Lusztig varieties}, Ann. sci. \'Ecole Norm. Sup. $4^e$ s\'erie, (2006), \textbf{39}, 467-511.
\bibitem{GHKR2} M. G\"ORTZ, T. J. HAINES, R. KOTTWITZ and D. C. REUMAN, \textsl{Affine {D}eligne-{L}usztig varieties in affine flag varieties}, arXiv:0805.0045v2, 2008.
\bibitem{kotiso} R. E. KOTTWITZ, \textsl{Isocrystals with additional structure},  Compositio Math. \textbf{56,} (1985), no.2, 201--220.
\bibitem{kot} R. E. KOTTWITZ, \textsl{On the Hodge-Newton decomposition for split groups},  Int. Math. Res. Not. \textbf{2003,} no.26, 1433--1447.
\bibitem{akot} R. E. KOTTWITZ, \textsl{Harmonic analysis on reductive $p$-adic groups and Lie algebras}, in \textsl{Harmonic Analysis, the Trace Formula, and Shimura Varieties}, 393--522, Clay Math. Proc., \textbf{4,} Amer. Math. Soc., Providence, RI, 2005.
\bibitem{krapo} R. E. KOTTWITZ and M. RAPOPORT, \textsl{On the existence of F-isocrystals}, Comment. Math. Helv. \textbf{78} (2003), 153--184.
\bibitem{cathy} C. LUCARELLI, \textsl{A converse to Mazur's inequality for split classical groups}, Journal of the Inst. Math. Jussieu (2004) \textbf{3} (2), 165--183.
\bibitem{cathy2} C. LUCARELLI, \textsl{A converse to Mazur's inequality for split classical groups}, Ph.D. Thesis, The University of Chicago, June 2004.
\bibitem{mazur} B. MAZUR, \textsl{Frobenius and the Hodge filtration}, Bull. Amer. Math. Soc. \textbf{78} (1972), 653--667.
\bibitem{mazur2} B. MAZUR, \textsl{Frobenius and the Hodge filtration (estimates)}, Ann. of Math. (2) \textbf{98} (1973), 58--95.
\bibitem{mozes} S. MOZES, \textsl{Reflection processes on graphs and Weyl groups} J. Combin. Theory Ser. A \textbf{53} (1990), no.1, 128--142.
\bibitem{rapoport} M. RAPOPORT, \textsl{A positivity property of the Satake isomorphism} Manuscripta Math. \textbf{101} (2000), no.2, 153--166.
\bibitem{rapoport2} M. RAPOPORT, \textsl{A guide to the reduction modulo $p$ of Shimura varieties}, in \textsl{Automorphic forms (I)}, Ast\'erisque No. \textbf{298} (2005), 271--318.
\bibitem{rapritch} M. RAPOPORT and M. RICHARTZ, \textsl{On the classification and specialization of F-isocrystals with additional structure}, Compositio Math. \textbf{103} (1996), no.2, 153--181.
\bibitem{stem2} J. R. STEMBRIDGE, \textsl{The partial order of dominant weights}, Adv. Math. \textbf{136} (1998), 340--364.
\bibitem{stem} J. R. STEMBRIDGE, \textsl{Minuscule elements of Weyl groups}, J. Algebra \textbf{235} (2001), no.2, 722--743.
\bibitem{eva} E. VIEHMANN, \textsl{The dimensions of affine Deligne-Lusztig varieties}, Ann. sci. \'{E}cole Norm. Sup. $4^e$ s\'erie \textbf{39} (2006), 513--526.
\bibitem{win} J.-P. WINTENBERGER, \textsl{Existence de $F$-cristaux avec structures suppl\'ementaires},  Adv. Math. \text{190} (2005), no. 1, 196--224.

\end{thebibliography}
\end{document}